\documentclass[reqno]{amsart}
\usepackage{amsfonts}
\usepackage{amssymb}
\usepackage{amsmath}

\setcounter{MaxMatrixCols}{10}

\oddsidemargin=-0.0cm \evensidemargin=-0.0cm \textwidth=16cm
\textheight=23cm \topmargin=-.3cm

\newtheorem{theorem}{Theorem}[section]

\newtheorem{corollary}[theorem]{Corollary}

\newtheorem{lemma}[theorem]{Lemma}

\newtheorem{proposition}[theorem]{Proposition}

\theoremstyle{remark}
\newtheorem{remark}[theorem]{Remark}

\numberwithin{equation}{section}


\newcommand{\ep}{\varepsilon}

\renewcommand{\div}{{\rm div}}

\begin{document}
\title[Wave equations with degenerate memory]{Regular global attractors for wave equations with degenerate memory}
\author[J. L. Shomberg]{Joseph L. Shomberg}
\subjclass[2010]{Primary: 35L70, 35B41; Secondary: 35R09, 74D99.}
\keywords{Degenerate viscoelasticity, relative displacement history, nonlinear wave equation, critical exponent, regular global attractor.}
\address{Joseph L. Shomberg, Department of Mathematics and Computer Science,
Providence College, Providence, RI 02918, USA}
\email{jshomber@providence.edu}
\date{\today}

\begin{abstract}
We consider the wave equation with degenerate viscoelastic dissipation recently examined in Cavalcanti, Fatori, and Ma, \emph{Attractors for wave equations with degenerate memory}, J. Differential Equations (2016).
Under certain extra assumptions (namely on the nonlinear term), we show the existence of a compact attracting set which provides further regularity for the global attractor and show that it consists of regular solutions.
\end{abstract}

\maketitle
\tableofcontents

\section{Introduction}

An elastic body perturbed from equilibrium may undergo a restoring force subject to both frictional and viscoelastic dissipation mechanisms.
The problem under consideration is the wave equation with degenerate viscoelastic dissipation in the unknown $u=u(x,t)$
\begin{align}
u_{tt} - \Delta u + \int_0^\infty g(s)\div[a(x)\nabla u(t-s)]ds + b(x)u_t + f(u) = h(x) \quad \text{in} \ \Omega\times\mathbb{R}^+,  \label{tpde}
\end{align}
defined on a bounded domain $\Omega$ in $\mathbb{R}^3$ with smooth (at least class $\mathcal{C}^2$) boundary $\Gamma$.
The equation is subject to Dirichlet boundary conditions
\begin{align}
u(x,t) = 0 \quad \text{on} \ \Gamma\times\mathbb{R}^+,  \label{tbc}
\end{align}
and the initial conditions
\begin{align}
u(x,0) = u_0(x) \quad \text{and} \quad u_t(x,0) = u_1(x) \quad \text{at} \ \Omega\times\{0\}.  \label{tic}
\end{align}

This problem was recently treated, to the extent of global well-posedness and global attractors, in \cite{CFM-16}.
The novelty here being the degenerate nature of the viscoelasticity. 
Similar problems have yielded several important results as well.
We mention some other works concerning semilinear wave equations with memory.
On the asymptotic behavior of solutions (in the sense of global attractors) see \cite{Conti-Pata-2005,CPS05,FePeAn2016,Pata-Zucchi-2001,Plinio&Pata09,PPZ08}, and on rates of decay of solutions one can also see \cite{LiZh2011,santos07,Tahamtani_799}.

To the problem under consideration here, the well-posedness was carried out under the guise of semigroup methods.
Here, local mild solutions and regular (or ``strong'' solutions) are obtained using the fact that the underlying operator is the infinitesimal generator of a strongly continuous semigroup of contractions on the Hilbertian phase space $\mathcal{H}$, and the other condition naturally being that the nonlinear term defines a locally Lipschitz continuous functional also on $\mathcal{H}$. 

The main result concerning the asymptotic behavior of \eqref{tpde}-\eqref{tic} in \cite{CFM-16} consists in demonstrating the existence of a finite dimensional global attractor for the semidynamical system $(\mathcal{H},S(t)).$
For this, the authors of \cite{CFM-16} rely on \cite[Proposition 7.9.4 and Theorem 7.9.6]{Ch-La-10}.
That is, the problem is of the asymptotically smooth gradient system class where the set of stationary points is bounded.
The so-called quasi-stability of the dynamical system $(\mathcal{H},S(t))$ involves finding a suitable (relatively) compact seminorm on $\mathcal{H}$ (i.e., the approach is similar to finding a global attractor via an $\alpha$-contraction method).
Instead of characterizing the global attractor as the omega-limit set of some bounded absorbing set $\mathcal{B}$ in $\mathcal{H},$ i.e. $\mathcal{A}=\omega(\mathcal{B}),$ the global attractor in this work is characterized with properties from the gradient system so that the global attractor is described by the union of unstable manifolds connecting the set of stationary points $\mathcal{N}$, i.e. $\mathcal{A}=\mathbb{M}^u(\mathcal{N})$.
Unlike the methods used to prove the existence of a global attractor by virtue of the former characterization, in the latter no (explicit) bounded absorbing set $\mathcal{B}$ nor any (explicit) uniform bound on solutions is used to prove the existence of the global attractor. 
Finally, it seems that an explicit bound in terms of some of the parameters of the problem (Lipschitz constant, etc.) can be given to the fractal dimension of the global attractor (indeed, see \cite[Theorem 3.4.5]{Chueshov-15}).
These results are obtained without assuming the two damping terms satisfy a geometric control condition (cf. e.g. \cite{Joly-Camille-13}). 

To treat the memory term, we define a past history variable using the {\em{relative displacement history}}, for all $x\in\Omega\subset \mathbb{R}^3$ and $s,t\in\mathbb{R}^+$,
\begin{equation}
\eta^t(x,s) := u(x,t) - u(x,t-s).  \label{memory-defn}
\end{equation}
In order for this formulation to make sense, we also need to prescribe the past history of $u(x,t)$, $t<0$.
Observe, from \eqref{memory-defn} we readily find the useful identity
\[
\int_0^\infty g(s)\div[a(x)\nabla u(t-s)]ds = - \int_0^\infty g(s)\div[a(x)\nabla\eta^t(s)]ds + k_0\div[a(x)\nabla u(s)],
\]
where $k_0:=\int_0^\infty g(s) ds$ assumed to be sufficiently small below (see \eqref{small-g}).
Thus, equations \eqref{tpde}-\eqref{tic} have an equivalent form in the unknowns $u=u(x,t)$ and $\eta^t=\eta^t(x,s),$ for all $x\in\Omega$ and $s,t\in\mathbb{R}^+$,
\begin{align}
u_{tt} & - \div[(1-k_0a(x))\nabla u]-\int_0^\infty g(s)\div[a(x)\nabla\eta^t(s)]ds + b(x)u_t + f(u) = h(x),  \label{pde-1} \\
\eta_t & = -\eta_s + u_t,  \label{pde-2}
\end{align}
with boundary conditions, for all $(x,t)\in\Gamma\times\mathbb{R}^+$,
\begin{align}
u(x,t) = 0 \quad \text{and} \quad \eta^t(x,s) = 0,  \label{bc}
\end{align}
and the following initial conditions at $t=0,$
\begin{align}
u(x,0) = u_0(x), \quad u_t(x,0) = u_1(x) \quad \text{and} \quad \eta^t(x,0) = 0, \quad \eta^0(x,s) = \eta_0(x,s).  \label{ic}
\end{align}

In this article, we aim to provide a regularity result to the global attractors found  in \cite{CFM-16} for the problem \eqref{tpde}-\eqref{tic}.

\section{Preliminaries}

This section contains a summary of the assumptions and main results of \cite{CFM-16}.

{\em{A word about notation:}} we will often drop the dependence on $x$ and even $t$ or $s$ from the unknowns $u(x,t)$ and $\eta^t(x,s)$ writing only $u$ and $\eta^t$ instead.
The norm in the space $L^p(\Omega)$ is denoted $\|\cdot\|_p$ except in the common occurrence when $p=2$ where we simply write the $L^2(\Omega)$ norm as $\|\cdot\|$. 
The $L^2(\Omega)$ product is simply denoted $(\cdot,\cdot).$
Other Sobolev norms are denoted by occurrence; in particular, since we are working with the homogeneous Dirichlet boundary conditions \eqref{bc}, in $H^1_0(\Omega)$, we will use the equivalent norm
\[
\|u\|_{H^1_0(\Omega)}=\|\nabla u\|,
\]
and in particular, 
\begin{align}
\|u\| \le \frac{1}{\sqrt{\lambda_1}}\|\nabla u\|,  \label{Poincare}
\end{align}
where $\lambda_1>0$ denotes the first eigenvalue of the Dirichlet--Laplacian.
With $D(-\Delta)=H^2(\Omega)\cap H^1_0(\Omega),$ we are able to define, for any $s\ge0,$ 
\[
H^s:=D((-\Delta)^{s/2}).
\]
Given a subset $B$ of a Banach space $X$, denote by $\|B\|_X$ the quantity $\sup_{x\in B}\|x\|_X$.
Finally, in many calculations $C$ denotes a {\em{generic}} positive constant which may or may not depend on several of the parameters involved in the formulation of the problem, and $Q(\cdot)$ will denote a {\em{generic}} positive nondecreasing function.

Concerning the model problem, we make the following assumptions.

\begin{description}
\item[(H1)] Let $a\in C^1({\overline{\Omega}})$ be such that 
\[
{\mathrm{meas}}\{ x\in\Gamma:a(x)>0 \} >0,
\]
and
\[
\mathcal{V}^1_a:=\left\{\psi\in L^2(\Omega):\int_\Omega a(x)|\nabla\psi(x)|^2 dx < \infty,\ \psi_{\mid\Gamma}=0 \right\}, 
\]
is a Hilbert space endowed with the product 
\[
(\chi,\psi)_{\mathcal{V}^1_a}:=\int_\Omega a(x)\nabla\chi(x) \cdot \nabla\psi(x) dx.
\]
(Two examples are given in \cite{CFM-16}.)
Above $\psi_{\mid\Gamma}=0$ is meant in the sense of trace which is well-defined when $\mathcal{V}^1_a\hookrightarrow W^{1,1}(\Omega)$.
In addition, we also assume the continuous embeddings hold
\[
H^{1}_0(\Omega)\hookrightarrow \mathcal{V}^1_a\hookrightarrow L^2(\Omega),
\]
and also that $Au:=\div(a(x)\nabla u)$ is a self-adjoint non-positive operator.

\item[(H2)] Assume $b\in L^\infty(\Omega)$ is a non-negative function and $c_0$ is a constant satisfying, for all $x\in\Omega,$ 
\begin{equation}
\inf_{x\in\Omega}\{a(x)+b(x)\} \ge c_0 > 0.  \label{away}
\end{equation}

\item[(H3)] Assume $g\in C^1(\mathbb{R}^+)\cap L^1(\mathbb{R}^+)$ satisfies, for all $s\ge0,$
\begin{equation}
g(s) \ge 0 \quad \text{and} \quad g'(s) \le -\delta g(s).  \label{kernel}
\end{equation}
We also impose on $g$ the smallness condition
\begin{equation}
k_0:=\int_0^\infty g(s) ds < \|a\|^{-1}_{\infty}.  \label{small-g}
\end{equation}
\end{description}

\begin{remark}
Assumption (H1) allows us to set the space for the past history function $\eta^t.$
Indeed, define 
\begin{equation}  \label{past-space}
\mathcal{M}^{0}:=L^2_g(\mathbb{R}^+;\mathcal{V}^{1}_a)=\left\{ \eta(x,s):\int_0^\infty g(s)\|\eta(x,s)\|^2_{\mathcal{V}^{1}_a}ds <\infty \right\}
\end{equation}
which is Hilbert with the product 
\[
(\eta,\zeta)_{\mathcal{M}^{0}}:=\int_0^\infty g(s) \left( \int_\Omega a(x)\nabla\eta(x,s)\cdot \nabla\zeta(x,s) dx \right) ds.
\]
\end{remark}

\begin{remark}  \label{r:degenerate}
It should be noted that in \cite{CFM-16}, the assumption (H2) allows one to view the role of the frictional damping coefficient $b$ as an arbitrarily small complementary damping in the following sense: if $\omega_0:=\{x\in\mathbb{R}^3:a(x)=0\}$, then what is only required is $b(x)>0$ on any neighborhood of $\omega_0$.
\end{remark}

\begin{remark}  \label{r:a-bnd}
Equation \eqref{kernel} of assumption (H3) implies $g$ decays to zero exponentially.
Moreover, by \eqref{small-g}, we have that, for all $x\in{\overline{\Omega}}$, 
\begin{align}
0<\ell_0 \le 1-k_0a(x)  \label{ell-ineq}
\end{align}
where
\begin{equation*}
\ell_0:=1-k_0\|a\|_\infty.
\end{equation*}
\end{remark}

Now we make our final assumptions.

\begin{description}
\item[(H4)] Let $f\in C^2(\Omega)$ and assume there exists $C_f>0$ such that, for all $s\in\mathbb{R},$ 
\begin{equation}
|f''(s)| \le C_f (1+|s|).  \label{assm-f-1}
\end{equation}
(Hence, the nonlinear term is allowed to attain critical growth.)
We also assume that 
\begin{equation}
\liminf_{|s|\rightarrow\infty} \frac{f(s)}{s} > -\ell_0\lambda_1  \label{assm-f-2}
\end{equation}
cf. \eqref{Poincare}.
\end{description}

\begin{remark}
The two conditions \eqref{assm-f-1} and \eqref{assm-f-2} are used in \cite{GGPS05} which treats the asymptotic behavior of a phase-field equation with memory.
The assumption \eqref{assm-f-1} implies there is a constant $C>0$ such that for all $r,s\in\mathbb{R}$
\begin{equation}
|f(r)-f(s)| \le C |r-s|(1+|r|^2+|s|^2).  \label{cons-f-1}
\end{equation}
The condition \eqref{cons-f-1} appears in many recent works on semilinear wave equations with memory (e.g. \cite{FePeAn2016}) and the strongly damped wave equation (this condition refers to the {\em sub}critical setting of those problems), see for example \cite{Carvalho_Cholewa_02-2,Carvalho_Cholewa_02,DellOro_Pata_2011,Graber-Shomberg-16,Pata&Squassina05,Plinio&Pata09,PPZ08}.
By \eqref{assm-f-2} we find that for some $\alpha\in(0,\lambda_1)$, there exists $\rho_f>0$ so that, for all $s\in\mathbb{R},$ there hold
\begin{equation}
f(s)s \ge -\ell_0\alpha s^2 - \rho_f  \label{cons-f-2}
\end{equation}
and, for $F(s):=\int_0^s f(\sigma)d\sigma$,
\begin{equation}
F(s) \ge -\frac{\ell_0\alpha}{2}s^2 - \rho_f.  \label{cons-f-3}
\end{equation}
Observe though both \eqref{cons-f-2} and \eqref{cons-f-3} follow when \eqref{assm-f-2} is replaced by the less general assumption,
\begin{equation} \label{cond-99}
\liminf_{|s|\rightarrow\infty} f'(s) \ge -\ell_0\lambda_1.
\end{equation}
Assumption \eqref{assm-f-1} and condition \eqref{cond-99} appear in equations with memory terms \cite{CGG11,Conti-Mola-08,CPS06,PPZ08}.
\end{remark}

Concerning the new regularity results described in section \ref{s:reg}, we additionally assume the following assumptions hold along with (H1)-(H4).

\begin{description}
\item[(H1r)] Suppose $a\in C^1({\overline{\Omega}})$ is such that 
\[
\mathcal{V}^2_a:=\left\{\psi\in L^2(\Omega):\int_\Omega a(x) \left( |\Delta\psi(x)|^2 + |\psi(x)|^2 \right) dx < \infty,\ \psi_{\mid\Gamma}=0 \right\}, 
\]
is a Hilbert space endowed with the product 
\[
(\chi,\psi)_{\mathcal{V}^2_a}:=\int_\Omega a(x) \left( \Delta\chi(x) \Delta\psi(x) + \chi(x) \psi(x) \right) dx.
\]
Also, assume the continuous embedding holds
\begin{equation*}
\mathcal{V}^2_a\hookrightarrow H^1_0(\Omega).
\end{equation*}
\end{description}

\begin{remark}
It should be noted that the embedding $D(-\Delta)\hookrightarrow \mathcal{V}^2_a$, where $D(-\Delta):=H^2(\Omega)\cap H^1_0(\Omega)$, does not hold. 
The interested reader should see \cite[Section 3]{CRV-08} where it is shown $H^2(\Omega)\not\subseteq\mathcal{V}^2_a$.
\end{remark}

\begin{description}
\item[(H4r)] Assume that there exists $\vartheta>0$ such that, for all $s\in\mathbb{R},$
\begin{equation}
f'(s)\ge-\vartheta.  \label{assm-f-3}
\end{equation}
\end{description}

\begin{remark}
The last assumption \eqref{assm-f-3} appears in \cite{CGG11,Frigeri10,Gal&Grasselli12,Gal-Shomberg15,Pata-Zelik-06}. 
Such a bound is commonly utilized to obtain the precompactness property for the semiflow associated with evolution equations where the use of fractional powers of the Laplace operator present a difficulty, if they are even well-defined.
\end{remark}

Throughout the remainder of this article, we simply denote \eqref{pde-1}-\eqref{ic} under assumptions (H1)-(H4) and (H1r) and (H4r) as problem P.

The finite energy phase-spaces we study problem P in involve the following Hilbert spaces.
First,
\[
\mathcal{H}^{0}:= H^{1}(\Omega)\times L^2(\Omega)\times \mathcal{M}^{0},
\]
endowed with the norm whose square is given by, for $U=(u,v,\eta)\in\mathcal{H}^{0},$
\[
\|U\|^2_{\mathcal{H}^{0}}:=\|\nabla u\|^2 + \|v\|^2 +\|\eta\|^2_{\mathcal{M}^{0}}.
\]
Later we also require 
\begin{equation*}
\mathcal{M}^1:=L^2_g(\mathbb{R}^+;\mathcal{V}^2_a)=\left\{ \eta:\int_0^\infty g(s)\|\eta(s)\|^2_{\mathcal{V}^2_a}ds <\infty \right\}
\end{equation*}
and
\[
\mathcal{H}^1:= H^2(\Omega)\times H^1(\Omega)\times \mathcal{M}^1,
\]
with the norm whose square is given by, for $U=(u,v,\eta)\in\mathcal{H}^1,$
\[
\|U\|^2_{\mathcal{H}^1}:=\|u\|^2_{H^{2}(\Omega)} + \|v\|^2_{H^1(\Omega)} +\|\eta\|^2_{\mathcal{M}^1}.
\]
Here $H^1(\Omega)$ is normed with 
\[
\|\psi\|_{H^1(\Omega)} = \left( \|\nabla \psi\| + \|\psi\| \right)^{1/2},  
\]
and concerning the $H^2(\Omega)$ norm above, we know by $H^2$-elliptic regularity theory (cf. e.g. \cite[section 8.4]{GT83}),
\begin{align}
\|\psi\|_{H^2(\Omega)} \le C\left( \|\Delta \psi\| + \|\psi\| \right),  \label{reg-est}
\end{align}
for some constant $C>0.$

So that we may write problem P in an operator formulation, we also define the following spaces,
\begin{align}
D(T):=\{ \eta\in\mathcal{M}^0:\eta_s\in\mathcal{M}^0,\ \eta(0)=0 \},  \notag 
\end{align}
where $\eta_s$ denotes the distributional derivative of $\eta$ and the equality $\eta(0)=0$ is meant as
\begin{equation*}
\lim_{s\rightarrow0}\|\eta(s)\| = 0,
\end{equation*}
and
\begin{align}
D(\mathcal{L}):=\left\{ U=(u,v,\eta)\in\mathcal{H}^0 \left| \begin{array}{l} v\in H^1_0(\Omega),\ \eta\in D(T), \\ \div[(1-k_0a(x))\nabla u] + \displaystyle\int_0^\infty g(s) \div[a(x)\nabla\eta(s)] ds \in L^2(\Omega) \end{array} \right. \right\},  \notag
\end{align}
to which we observe that there holds $D(\mathcal{L})\subset\mathcal{H}^1.$
On these spaces we defined the associated operators
\begin{align}
T\eta:=-\eta_s, \quad \text{for}\ \eta\in D(T),  \notag
\end{align}
and
\begin{align}
\mathcal{L}U:= \begin{pmatrix} v \\ \div[(1-k_0a(x))\nabla u] + \displaystyle\int_0^\infty g(s)\div[a(x)\nabla\eta(s)] ds - b(x)v \\ v+T\eta \end{pmatrix}, \quad \text{for}\ U\in D(\mathcal{L}).  \notag
\end{align}
For each $t\in[0,T]$, the equation 
\begin{equation}
\eta^t_t = T\eta^t + v(t)  \label{memory-2}
\end{equation}
holds as an ODE in $\mathcal{M}^0$ subject to the initial condition
\begin{equation}
\eta^0=\eta_0\in\mathcal{M}^0.  \label{memory-3}
\end{equation}
Concerning the IVP (\ref{memory-2})-(\ref{memory-3}), we have the following proposition (cf. \cite{Pata-Zucchi-2001}).

\begin{proposition}  \label{t:gen-T}
The operator $T$ with domain $D(T)$ the generator of the right-translation semigroup.
Moreover, $\eta^t$ can be explicitly represented by
\begin{equation}
\eta^t(s) = \left\{ \begin{array}{ll} u(t)-u(t-s) & \text{if}\ 0\le s\le t \\
\eta_0(s-t) + u(t) - u(0) & \text{if}\ s>t. \end{array} \right.  \label{eta}
\end{equation}
\end{proposition}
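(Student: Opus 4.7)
The plan is to establish the two assertions separately: first that $(T, D(T))$ generates a $C_0$-semigroup of contractions on $\mathcal{M}^0$ which one then identifies with right-translation, and second that the explicit formula \eqref{eta} follows from the mild-solution (variation of constants) representation of the nonhomogeneous problem \eqref{memory-2}--\eqref{memory-3}.

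For the first part I would apply the Lumer--Phillips theorem. Density of $D(T)$ in $\mathcal{M}^0$ is immediate since smooth, compactly supported functions in $\mathbb{R}^+$ with values in $\mathcal{V}^1_a \cap \{\psi : \psi(0) = 0\}$ form a dense subspace. Dissipativity would be checked via integration by parts in $s$: for $\eta \in D(T)$,
\[
(T\eta,\eta)_{\mathcal{M}^0} = -\int_0^\infty g(s)(\eta_s(s),\eta(s))_{\mathcal{V}^1_a}\,ds = \tfrac{1}{2}\int_0^\infty g'(s)\|\eta(s)\|^2_{\mathcal{V}^1_a}\,ds,
\]
where the boundary contribution at $s=0$ vanishes because $\eta(0)=0$ (in the sense prescribed in $D(T)$) and the contribution at $s=\infty$ vanishes by the exponential decay in \eqref{kernel}. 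By \eqref{kernel} this integral is bounded above by $-\tfrac{\delta}{2}\|\eta\|^2_{\mathcal{M}^0} \le 0$. The range condition $\mathrm{Range}(\lambda I - T) = \mathcal{M}^0$ for some $\lambda>0$ I would verify by direct integration: given $\xi \in \mathcal{M}^0$, the unique candidate with $\eta(0)=0$ is
\[
\eta(s) = \int_0^s e^{-\lambda(s-r)}\xi(r)\,dr,
\]
and a Cauchy--Schwarz / Minkowski estimate against the weight $g$ (using $g$ integrable and monotone-type bounds from $g'\le -\delta g$) confirms $\eta \in \mathcal{M}^0$ with $\eta_s = \xi - \lambda\eta \in \mathcal{M}^0$. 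Closedness of $T$ follows from this, and Lumer--Phillips yields the generation.

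To identify the semigroup, I would observe that the candidate right-translation family
\[
(R(t)\eta)(s) = \begin{cases} 0 & 0\le s\le t,\\ \eta(s-t) & s>t,\end{cases}
\]
is strongly continuous on $\mathcal{M}^0$ (using the weighted $L^2_g$ structure and the fact that translations are continuous on $L^2_g$), each $R(t)$ is a contraction because $g$ is nonincreasing along translates in the appropriate sense, and its generator coincides with $-\partial_s$ on the domain characterized exactly by $D(T)$; since a $C_0$-semigroup is uniquely determined by its generator, this forces $R(t)=e^{tT}$.

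For the explicit formula \eqref{eta}, I would apply the standard variation of constants representation to \eqref{memory-2}--\eqref{memory-3}:
\[
\eta^t = R(t)\eta_0 + \int_0^t R(t-\tau)\,v(\tau)\,d\tau,
\]
interpreted pointwise in $s$. A direct computation then splits into the two ranges: for $s > t$ one gets $(R(t)\eta_0)(s) = \eta_0(s-t)$ and the integral over $\tau \in [0,t]$ contributes $\int_0^t v(\tau)\,d\tau = u(t)-u(0)$; for $0 \le s \le t$ one gets $(R(t)\eta_0)(s)=0$ and the relevant range of $\tau$ shrinks to $[t-s,t]$, yielding $\int_{t-s}^t v(\tau)\,d\tau = u(t)-u(t-s)$. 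This matches \eqref{eta} exactly.

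I expect the main technical obstacle to be the verification that the explicit right-translation family $R(t)$ is genuinely a $C_0$-contraction semigroup on the weighted space $\mathcal{M}^0$ and that its generator's domain agrees with $D(T)$ (rather than a larger or smaller subspace), since the weight $g$ interacts nontrivially with translation. All remaining ingredients are standard semigroup bookkeeping.
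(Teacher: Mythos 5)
Your argument is correct and is essentially the standard one: the paper offers no proof of this proposition, merely citing Pata and Zucchi, and your Lumer--Phillips generation argument together with the variation-of-constants identification of \eqref{eta} is precisely the route that reference takes. The only points deserving a little extra care are that the boundary term at $s=0$ in your dissipativity computation must be reconciled with the fact that $\eta(0)=0$ is imposed only in the $L^2(\Omega)$ sense (one obtains an inequality $(T\eta,\eta)_{\mathcal{M}^0}\le 0$, which suffices), and that the Duhamel integral should first be justified for regular data and extended by density, since the forcing $v(\tau)=u_t(\tau)$ is merely in $L^2(\Omega)$ and the constant-in-$s$ function it defines need not belong to $\mathcal{M}^0$.
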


Next we define the nonlinear functional by
\begin{align}
\mathcal{F}(U):=(0,-f(u)+h,0).  \notag
\end{align}
Problem P can now be written as the abstract Cauchy problem on $\mathcal{H}^0$,
\begin{equation}  \label{acp}
\left\{ \begin{array}{ll} \displaystyle\frac{d}{dt}U = \mathcal{L}U+\mathcal{F}(U), & t>0, \\ U(0)=U_0=(u_0,u_1,\eta_0)\in\mathcal{H}^0. \end{array} \right.
\end{equation}
Later, when we are concerned with the regularity properties of problem P, we will also be interested in a more regular subspace of $\mathcal{H}^0$ (this is discussed further below).

Concerning the spaces $\mathcal{V}^1_a$ and $\mathcal{V}^2_a$ from above, it is important to note that although the injection $\mathcal{V}^1_a \hookleftarrow \mathcal{V}^2_a$ is compact, it does not follow that the injection $\mathcal{M}^0 \hookleftarrow \mathcal{M}^1$ is. 
Indeed, see \cite{Pata-Zucchi-2001} for a counterexample.
Moreover, this means the embedding $\mathcal{H}^1\hookrightarrow\mathcal{H}^0$ is not compact.
Such compactness between the ``natural phase spaces'' is essential to obtaining further regularity for the global attractors and even for the construction of finite dimensional exponential attractors. 
To alleviate this issue we follow \cite{GRP01,Pata-Zucchi-2001} (also see \cite{CPS06,GMPZ10}) and define the so-called {\em{tail function}} of $\eta\in\mathcal{M}^{0}$ by, for all $\tau\ge0,$ 
\begin{equation*}
\mathbb{T}(\tau;\eta) := \int\limits_{(0,1/\tau)\cup(\tau,\infty)} g(s) \|\nabla\eta(s)\|^2 ds.
\end{equation*}
With this we set,
\begin{equation*}
\mathcal{T}^1 := \left\{ \eta\in\mathcal{M}^1 : \eta_s\in\mathcal{M}^{0},\ \eta(0)=0,\ \sup_{\tau\ge1} \tau\mathbb{T}(\tau;\eta)<\infty \right\}.
\end{equation*}
The space $\mathcal{T}^1$ is Banach with the norm whose square is defined by
\begin{equation}
\|\eta\|^2_{\mathcal{T}^1} := \|\eta\|^2_{\mathcal{M}^1} + \|\eta_s\|^2_{\mathcal{M}^{0}} + \sup_{\tau\ge1} \tau\mathbb{T}(\tau;\eta).  \label{new-norm}
\end{equation}
Importantly, the embedding $\mathcal{T}^1\hookrightarrow\mathcal{M}^0$ is compact.
(We should mention that although the works \cite{CPS06,GMPZ10} treat PDE with an {\em{integrated past history}} variable, the compactness issue still applies to models with a relative displacement history variable, such as \eqref{memory-defn} here.
In fact, the compactness issue is more delicate in this setting; one must introduce so-called ``tail functions,'' cf. \cite[Lemma 3.1]{CPS06} or \cite[Proposition 5.4]{GMPZ10}).
Hence, let us now also define the space 
\begin{align}
\mathcal{K}^1 := H^{2}(\Omega)\times H^{1}(\Omega)\times \mathcal{T}^1, \label{reg-compact}
\end{align}
and the desired {\em{compact}} embedding $\mathcal{K}^1\hookrightarrow\mathcal{H}^0$ holds.
Again, each space is equipped with the corresponding graph norm whose square is defined by, for all $U=(u,v,\eta)\in\mathcal{K}^1$,
\begin{equation*}
\|U\|^2_{\mathcal{K}^1} := \|u\|^2_{H^2(\Omega)} + \|v\|^2_{H^1(\Omega)} + \|\eta\|^2_{\mathcal{T}^1}.  \label{reg-norm}
\end{equation*}

Concerning the IVP (\ref{memory-2})-(\ref{memory-3}), we will also call upon the following (cf. \cite[Lemmas 3.6]{CPS06}). 

\begin{lemma}  \label{what-2}
Let $\eta_0\in D(T)$.
Assume there is $\rho>0$ such that, for all $t\ge0$, $\|\nabla u(t)\|\le\rho$.
Then there is a constant $C>0$ such that, for all $t\ge0$,
\begin{align}
\sup_{\tau\ge1} \tau\mathbb{T}(\tau;\eta^t) \le 2 \left( t+2 \right)e^{-\delta t} \sup_{\tau\ge1} \tau\mathbb{T}(\tau;\eta_0) + C\rho^2.  \notag
\end{align}
\end{lemma}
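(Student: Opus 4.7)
The natural plan is to exploit the explicit representation of $\eta^t$ from Proposition \ref{t:gen-T} to split the tail $\mathbb{T}(\tau;\eta^t)$ into a ``background'' part driven by $u$ (which inherits the uniform bound $\|\nabla u(t)\|\le\rho$) and a ``history'' part driven by $\eta_0$ (which appears only on the set $\{s>t\}$). Concretely, using
\[
\|\nabla\eta^t(s)\|^2\le 2\,\mathbf{1}_{\{s>t\}}\|\nabla\eta_0(s-t)\|^2 + 8\rho^2,
\]
I would write $\mathbb{T}(\tau;\eta^t)\le J_1(\tau,t) + J_2(\tau,t)$ where $J_1$ collects the $\rho^2$--contribution and $J_2$ the $\eta_0$--contribution, each integrated against $g$ over $(0,1/\tau)\cup(\tau,\infty)$.

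For $J_1$, I would use the two consequences of (H3): the pointwise bound $g(s)\le g(0)e^{-\delta s}$ gives both $\int_0^{1/\tau} g\,ds\le g(0)/\tau$ and $\int_\tau^\infty g\,ds\le (g(0)/\delta)e^{-\delta\tau}$. Multiplying by $\tau$ absorbs the first factor and the elementary bound $\tau e^{-\delta\tau}\le 1/(e\delta)$ handles the second, so $\tau J_1\le C\rho^2$ uniformly in $\tau\ge 1$, accounting for the second term in the lemma's right-hand side.

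For $J_2$, the key identity after the substitution $\sigma=s-t$ is
\[
\int_{E} g(s)\|\nabla\eta_0(s-t)\|^2\,ds = \int_{E-t} g(\sigma+t)\|\nabla\eta_0(\sigma)\|^2\,d\sigma \le e^{-\delta t}\int_{E-t} g(\sigma)\|\nabla\eta_0(\sigma)\|^2\,d\sigma,
\]
where the semigroup-style inequality $g(\sigma+t)\le g(\sigma)e^{-\delta t}$ follows from monotonicity of $e^{\delta s}g(s)$, itself a consequence of \eqref{kernel}. Thus the factor $e^{-\delta t}$ in the claim emerges automatically. It remains to extract the linear factor $(t+2)$. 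I would do a case analysis on the relative size of $\tau$ and $t$: when $\tau\ge t+1$, the shifted tail $(\tau-t,\infty)$ satisfies $\tau-t\ge 1$ so the key estimate $\int_{\tau-t}^\infty g\|\nabla\eta_0\|^2\,d\sigma\le \mathbb{T}(\tau-t;\eta_0)\le(\tau-t)^{-1}\sup_{\tau'\ge 1}\tau'\mathbb{T}(\tau';\eta_0)$ applies, giving the coefficient $2\tau/(\tau-t)=2(1+t/(\tau-t))\le 2(1+t)$; when $\tau<t+1$, I bound the shifted integral crudely by $\int_0^\infty g\|\nabla\eta_0\|^2\,d\sigma\le \mathbb{T}(1;\eta_0)\le\sup_{\tau'\ge 1}\tau'\mathbb{T}(\tau';\eta_0)$, and the prefactor is then $2\tau\le 2(t+1)$. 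The small-$s$ contribution $(t,1/\tau)$ (present only when $t<1/\tau$) is handled by the bound $\int_0^{1/\tau}g\|\nabla\eta_0\|^2\,d\sigma\le\mathbb{T}(\tau;\eta_0)\le \tau^{-1}\sup$, which contributes an additional $2e^{-\delta t}\sup$, pushing the coefficient from $2(t+1)$ to the desired $2(t+2)$.

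The main obstacle is the case analysis around $\tau\approx t$, where the naive bound $2\tau/(\tau-t)$ blows up; the remedy is exactly the dichotomy $\tau-t\ge 1$ versus $\tau-t<1$ above, together with the observation that once $\tau<t+1$ one may discard the tail cutoff $\tau-t$ entirely and absorb the $\eta_0$-norm into $\sup\tau'\mathbb{T}(\tau';\eta_0)$ via $\mathbb{T}(1;\eta_0)=\|\eta_0\|_{\mathcal{M}^0}^2$. Collecting all contributions and taking $\sup_{\tau\ge 1}$ yields the stated estimate.
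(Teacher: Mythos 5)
Your argument is correct and is essentially the standard proof of this estimate; the paper itself does not prove Lemma \ref{what-2} but imports it from \cite[Lemma 3.6]{CPS06}, where the same strategy (split via the representation \eqref{eta}, use $e^{\delta s}g(s)$ non-increasing to extract $e^{-\delta t}$ from the shifted integral, and run the dichotomy $\tau-t\ge 1$ versus $\tau-t<1$) is used. One cosmetic slip: $\mathbb{T}(1;\eta_0)$ equals $\int_0^\infty g(s)\|\nabla\eta_0(s)\|^2\,ds$, not $\|\eta_0\|^2_{\mathcal{M}^0}$ (the latter carries the weight $a(x)$), but you only need $\mathbb{T}(1;\eta_0)\le\sup_{\tau'\ge1}\tau'\mathbb{T}(\tau';\eta_0)$, so the argument stands.
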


We now report some results from \cite{CFM-16} who only need to assume (H1)-(H4) hold.
The following result is from \cite[Theorem 2.1]{CFM-16}.
The proof follows by relying on classical semigroup theory; namely, the operator $\mathcal{L}$ is the infinitesimal generator of a $C^0$-semigroup of contractions $e^{\mathcal{L}t}$ in $\mathcal{H}^0$ (cf. \cite[Lemma 3.1]{CFM-16}) and the local Lipschitz continuity of $\mathcal{F}:\mathcal{H}^0\rightarrow\mathcal{H}^0$.

\begin{theorem}  \label{t:well-posedness}
Given $h\in L^2(\Omega)$ and $U_0=(u_0,u_1,\eta_0)\in\mathcal{H}^0$, problem P possesses a unique global mild solution satisfying the regularity
\begin{equation}  \label{mild-reg}
u\in C([0,\infty);H^1_0(\Omega)), \quad u_t\in C([0,\infty);L^2(\Omega)) \quad \text{and} \quad \eta^t\in C([0,\infty);\mathcal{M}^0).
\end{equation}
If $U_0=(u_0,u_1,\eta_0)\in D(\mathcal{L})$, the solution is regular and  satisfies 
\begin{equation}  \label{str-reg}
U\in C([0,\infty);D(\mathcal{L})).
\end{equation}
In addition, if $Z^i(t)=(u^i(t),u^i_t(t),\eta^{i,t})$, $i=1,2$, are any two mild solutions to problem P corresponding to the initial data $Z^1_0,Z^2_0\in\mathcal{H}^0$, respectively, where $\|Z^1_0\|_{\mathcal{H}^0}\le R$ and $\|Z^2_0\|_{\mathcal{H}^0}\le R$ for some $R>0$, then for any $T>0$ and for all $t\in[0,T],$
\begin{equation}  \label{cont-dep}
\|Z^1(t)-Z^2(t)\|_{\mathcal{H}^0} \le e^{Q(R)T}\|Z^1(0)-Z^2(0)\|_{\mathcal{H}^0}
\end{equation}
for some positive nondecreasing function $Q(\cdot)$.
\end{theorem}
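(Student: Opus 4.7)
\bigskip

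\noindent\textbf{Proof proposal.} The plan is to recast problem P as the abstract Cauchy problem \eqref{acp} and apply the standard semilinear semigroup machinery (see e.g. Pazy's monograph). The scheme has three pieces: (i) show $\mathcal{L}$ generates a $C_0$-semigroup of contractions on $\mathcal{H}^0$ via Lumer--Phillips; (ii) show $\mathcal{F}:\mathcal{H}^0\to\mathcal{H}^0$ is locally Lipschitz; (iii) upgrade the resulting local mild solution to a global one by an energy estimate using \eqref{cons-f-2}--\eqref{cons-f-3}.

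For step (i), dissipativity of $\mathcal{L}$ on $\mathcal{H}^0$ follows from a direct computation: for $U=(u,v,\eta)\in D(\mathcal{L})$, use \eqref{ell-ineq} to endow the first component with the equivalent inner product weighted by $1-k_0 a(x)$, integrate by parts, and pick up (a) the term $-\int_\Omega b(x) v^2\,dx\le 0$ from the friction, and (b) the term $\tfrac12\int_0^\infty g'(s)\|\eta(s)\|^2_{\mathcal V^1_a}\,ds\le 0$ from $(T\eta,\eta)_{\mathcal M^0}$ after integration by parts in $s$ (boundary terms vanish since $\eta(0)=0$ and $g(s)\to 0$), which is non-positive by \eqref{kernel}. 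Maximality, i.e.\ surjectivity of $\lambda I-\mathcal{L}$ for some $\lambda>0$, reduces to solving an elliptic problem for $u$ of the form $\lambda^2 u-\div[(1-k_0a)\nabla u]+\lambda b u=F$ (after eliminating $v$ and $\eta$ by $v=\lambda u-u_0$ and the explicit formula $\eta(s)=\int_0^s e^{-\lambda(s-\sigma)}v\,d\sigma+e^{-\lambda s}\eta_0(s)$); coercivity is guaranteed by \eqref{away} and \eqref{ell-ineq}, and Lax--Milgram yields a solution in $H^1_0(\Omega)$, which then lifts to an element of $D(\mathcal{L})$.

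For step (ii), the locally Lipschitz property of $\mathcal{F}$ rests on the critical growth estimate \eqref{cons-f-1}. Given $U^i=(u^i,v^i,\eta^i)$ with $\|U^i\|_{\mathcal{H}^0}\le R$, H\"older's inequality gives
\begin{equation*}
\|f(u^1)-f(u^2)\|\le C\bigl\|(1+|u^1|^2+|u^2|^2)(u^1-u^2)\bigr\|\le C\bigl(1+\|u^1\|_6^2+\|u^2\|_6^2\bigr)\|u^1-u^2\|_6,
\end{equation*}
and the Sobolev embedding $H^1_0(\Omega)\hookrightarrow L^6(\Omega)$ valid in $\mathbb R^3$ turns this into a bound by $Q(R)\|U^1-U^2\|_{\mathcal{H}^0}$. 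Together with step (i), the standard semilinear theory then gives a unique local mild solution $U\in C([0,T_{\max});\mathcal{H}^0)$ of \eqref{acp} with the regularity \eqref{mild-reg}, and whenever $U_0\in D(\mathcal{L})$ the solution lies in $C([0,T_{\max});D(\mathcal L))\cap C^1([0,T_{\max});\mathcal H^0)$, which is \eqref{str-reg}. Continuous dependence \eqref{cont-dep} is obtained by applying $\mathcal{L}$-contraction plus the local Lipschitz bound to the difference $Z^1-Z^2$ in the integral (Duhamel) formulation and invoking Gronwall.

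The main obstacle is ruling out finite-time blow-up, i.e.\ promoting $T_{\max}=\infty$. For this, test \eqref{pde-1} with $u_t$ and use \eqref{pde-2} to derive an energy identity for
\begin{equation*}
E(t):=\tfrac12\|u_t\|^2+\tfrac12\int_\Omega(1-k_0 a(x))|\nabla u|^2\,dx+\tfrac12\|\eta^t\|^2_{\mathcal{M}^0}+\int_\Omega F(u)\,dx-\int_\Omega h u\,dx,
\end{equation*}
whose time derivative is $-\int_\Omega b(x)u_t^2\,dx+\tfrac12\int_0^\infty g'(s)\|\eta^t(s)\|^2_{\mathcal V^1_a}\,ds\le 0$. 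Combining this with \eqref{cons-f-3}, \eqref{ell-ineq}, Poincar\'e \eqref{Poincare} (using $\alpha<\lambda_1$), and Young's inequality on the $hu$ term yields a coercive lower bound $E(t)\ge c\|U(t)\|^2_{\mathcal{H}^0}-C$, hence a uniform a priori bound on $\|U(t)\|_{\mathcal H^0}$ on the maximal interval; this forbids blow-up and gives $T_{\max}=\infty$, completing the proof.
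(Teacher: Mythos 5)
Your proposal is correct and follows exactly the route the paper indicates: generation of a $C^0$-semigroup of contractions by $\mathcal{L}$ (via Lumer--Phillips), local Lipschitz continuity of $\mathcal{F}$ on $\mathcal{H}^0$ through \eqref{cons-f-1} and the embedding $H^1_0(\Omega)\hookrightarrow L^6(\Omega)$, and globalization via the decreasing, coercive energy \eqref{energy}. The paper itself only sketches this argument and defers the details to \cite[Theorem 2.1, Lemma 3.1]{CFM-16}, so your write-up is essentially a faithful expansion of the same proof.
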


The next result depends on \cite[Lemma 3.3]{CFM-16}.
For this we define the ``energy functional'' which is used to extend local solutions to global ones, as well as demonstrate the gradient structure of problem P.
\begin{equation}  \label{energy}
E(t) := \|u_t(t)\|^2 + \int_\Omega (1-k_0a(x))|\nabla u(t)|^2 dx + \|\eta^t\|^2_{\mathcal{M}^0} + 2\int_\Omega \left( F(u(t))-h(x)u(t) \right)dx.
\end{equation}

\begin{lemma}
The energy $E(t)$ is non-increasing along any solution $U(t)=(u(t),u_t(t),\eta^t)$.
In addition, there exists $\delta_0,C_{fh}>0$, independent of $U$, such that for all $t\ge0,$
\begin{equation}  \label{lower-e}
E(t) \le \delta_0\|(u(t),u_t(t),\eta^t)\|^2_{\mathcal{H}^0} - C_{fh}.
\end{equation}
\end{lemma}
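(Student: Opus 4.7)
For the monotonicity claim, I would pair \eqref{pde-1} with $u_t$ in $L^2(\Omega)$ and simultaneously pair \eqref{pde-2} with $\eta^t$ in $\mathcal{M}^0$, then add. After integration by parts in $x$ (using \eqref{bc}), the $\div[(1-k_0a)\nabla u]$ and $\int_0^\infty g(s)\div[a\nabla\eta^t(s)]\,ds$ terms, together with $u_{tt}u_t$, $f(u)u_t$ and $hu_t$, assemble into $\frac{1}{2}\frac{d}{dt}E(t)$. The remaining pieces are $-\|\sqrt{b}\,u_t\|^2$ and $(T\eta^t,\eta^t)_{\mathcal{M}^0}$. For the memory piece I integrate by parts in $s$, using $\eta^t(0)=0$ and the exponential decay of $g$ supplied by \eqref{kernel}, to obtain
\[
(T\eta^t,\eta^t)_{\mathcal{M}^0}=\tfrac{1}{2}\int_0^\infty g'(s)\|\eta^t(s)\|^2_{\mathcal{V}^1_a}\,ds\le -\tfrac{\delta}{2}\|\eta^t\|^2_{\mathcal{M}^0}.
\]
This yields $\frac{d}{dt}E(t)\le -2\|\sqrt{b}\,u_t\|^2-\delta\|\eta^t\|^2_{\mathcal{M}^0}\le 0$.

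For the estimate \eqref{lower-e}, I would expand $E(t)$ and estimate each of its possibly-indefinite contributions in the phase-space norm. The three nonnegative quadratic pieces match the components of $\|U\|^2_{\mathcal{H}^0}$ directly, with \eqref{ell-ineq} giving $\int(1-k_0a)|\nabla u|^2\ge\ell_0\|\nabla u\|^2$. The two terms that can be negative are $2\int F(u)$ and $-2\int hu$. For the first, \eqref{cons-f-3} gives $2\int F(u)\ge -\ell_0\alpha\|u\|^2 - 2\rho_f|\Omega|$, and the Poincaré inequality \eqref{Poincare} turns $\|u\|^2$ into $\|\nabla u\|^2/\lambda_1$; since \eqref{assm-f-2} provides $\alpha<\lambda_1$, combining with $\ell_0\|\nabla u\|^2$ leaves a strictly positive coefficient $\ell_0(1-\alpha/\lambda_1)$ on $\|\nabla u\|^2$. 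For the forcing contribution, Cauchy–Schwarz, \eqref{Poincare} and Young's inequality with a small parameter $\ep>0$ give
\[
|2\textstyle\int hu|\le \ep\|\nabla u\|^2 + \frac{\|h\|^2}{\ep\lambda_1}.
\]
Choosing $\ep$ so small that $\ell_0(1-\alpha/\lambda_1)-\ep>0$, defining $\delta_0>0$ as the minimum of that coefficient with the constants $1$ in front of $\|u_t\|^2$ and $\|\eta^t\|^2_{\mathcal{M}^0}$, and collecting the constants into $C_{fh}:=2\rho_f|\Omega|+\|h\|^2/(\ep\lambda_1)$ yields the claimed bound \eqref{lower-e}.

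The main obstacle is the balancing argument in the second step: the ``wrong-sign'' contributions coming from $F(u)$ and from $hu$ must both be absorbed into the coercive $\ell_0\|\nabla u\|^2$ term while leaving a uniform positive margin independent of $U$. This absorption is precisely what forces one to invoke all three structural assumptions together — the smallness \eqref{small-g} via $\ell_0>0$, the sharp spectral bound $\alpha<\lambda_1$ from \eqref{assm-f-2}, and the quadratic control \eqref{cons-f-3} on $F$ — and to couple them through \eqref{Poincare} with a Young splitting whose parameter is tuned to the spectral gap $1-\alpha/\lambda_1$.
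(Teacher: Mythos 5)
Your argument is correct and is essentially the paper's own: the monotonicity computation (multiplying \eqref{pde-1} by $2u_t$, using \eqref{pde-2} and integrating by parts in $s$ with \eqref{kernel}) is exactly the derivation \eqref{ued-1}--\eqref{ued-7} carried out in the proof of Corollary \ref{t:unif-bnd}, and your coercivity estimate uses precisely the ingredients \eqref{ell-ineq}, \eqref{cons-f-3} and \eqref{Poincare} that the paper assembles for this purpose (the lemma itself is otherwise deferred to \cite[Lemma 3.3]{CFM-16}). One remark: what you prove is the lower bound $E(t)\ge \delta_0\|(u,u_t,\eta^t)\|^2_{\mathcal{H}^0}-C_{fh}$, which is what the label of \eqref{lower-e} and its later use (deducing \eqref{unif-bnd} and \eqref{ued-4} from the monotonicity of $E$) require; the inequality as printed in \eqref{lower-e} has the opposite sense and appears to be a typographical slip, since an upper bound of that form cannot hold for a quartically growing $F$.
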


The following is \cite[Theorem 2.2]{CFM-16}.

\begin{theorem}  \label{t:global-attr}
Let $h\in L^2(\Omega)$ and $U_0=(u_0,u_1,\eta_0)\in\mathcal{H}^0$.
The dynamical system $(\mathcal{H}^0,S(t))$ generated by the mild solutions of Problem P is gradient and possesses a global attractor $\mathcal{A}$ which has finite (fractal) dimension and coincides with the unstable manifold $\mathbb{M}^n(\mathcal{N})$ of stationary solutions of problem P.
\end{theorem}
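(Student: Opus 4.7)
The plan is to follow the gradient-system with quasi-stability framework of Chueshov--Lasiecka, as the authors signal in the introduction via \cite[Proposition 7.9.4 and Theorem 7.9.6]{Ch-La-10}. The first task is to verify that $(\mathcal{H}^0,S(t))$ is a well-defined continuous semiflow (this is Theorem \ref{t:well-posedness}) and that it admits the strict Lyapunov functional $E(t)$ from \eqref{energy}. Monotonicity of $E$ is the previous lemma; strictness amounts to showing that if $E(U(t))$ is constant in $t$ on some orbit, then $U_0$ is an equilibrium. This follows from the differential energy identity: the dissipative contribution consists of a frictional piece $\int_\Omega b(x)|u_t|^2\,dx$ and a memory piece controlled (via (H3)) by $\delta\|\eta^t\|^2_{\mathcal{M}^0}$, and by assumption \eqref{away} the combined damping forces $u_t\equiv 0$ and $\eta^t\equiv 0$, whence $u$ is stationary.

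Next I would verify boundedness of the set of equilibria $\mathcal{N}$. A stationary point $(u_*,0,0)$ satisfies $-\div[(1-k_0a)\nabla u_*]+f(u_*)=h$ in $H^{-1}$; testing with $u_*$, using \eqref{ell-ineq}, the coercivity consequence \eqref{cons-f-2} of (H4), and the Poincaré inequality \eqref{Poincare} with $\alpha<\lambda_1$, yields an a priori bound $\|\nabla u_*\|\le C(\|h\|,\rho_f)$. Combined with the lower bound \eqref{lower-e}, standard gradient-system theory then gives that bounded sets are eventually absorbed, because any orbit satisfies $\|U(t)\|_{\mathcal{H}^0}^2\le (E(0)+C_{fh})/\delta_0$ so orbits are bounded a priori in $\mathcal{H}^0$.

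The core of the proof --- and the step I expect to be hardest --- is establishing asymptotic smoothness (in Chueshov--Lasiecka form: a stabilizability estimate on the difference of two trajectories). For $Z^i(t)=(u^i,u^i_t,\eta^{i,t})$ with $Z^i_0$ in a bounded set, set $\bar u = u^1-u^2$, $\bar\eta=\eta^{1,t}-\eta^{2,t}$. Writing the equation for the difference and testing with $\bar u_t+\varepsilon\bar u$ (plus the standard memory multiplier on $\bar\eta$) produces, after using \eqref{kernel} to absorb memory terms, an inequality of the form
\begin{equation*}
\frac{d}{dt}\Phi(t)+c\,\Phi(t)\le C\bigl\|f(u^1)-f(u^2)\bigr\|_{-\sigma}\,\|\bar u_t\|+\text{compact lower order},
\end{equation*}
where $\Phi$ is equivalent to $\|Z^1-Z^2\|_{\mathcal{H}^0}^2$. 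The critical-growth estimate \eqref{cons-f-1} together with the Sobolev embedding $H^1\hookrightarrow L^6$ lets us control $\|f(u^1)-f(u^2)\|$ by $\|\bar u\|_{H^1}$ times a $Q(R)$ prefactor; the key is that the leading $\|\bar u\|_{H^1}^2$ contribution can be split into a genuinely compact seminorm (via compact embedding $H^1\hookrightarrow L^4$) plus a term absorbable by the damping. Gronwall then yields a stabilizability estimate
\begin{equation*}
\|Z^1(t)-Z^2(t)\|_{\mathcal{H}^0}^2\le \psi(t)\|Z^1_0-Z^2_0\|_{\mathcal{H}^0}^2+K\sup_{[0,t]}n_*(Z^1-Z^2)^2,
\end{equation*}
with $\psi(t)\to 0$ and $n_*$ a compact seminorm on $\mathcal{H}^0$.

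With the gradient structure, boundedness of $\mathcal{N}$, and quasi-stability in hand, \cite[Proposition 7.9.4]{Ch-La-10} yields the existence of a compact global attractor $\mathcal{A}$, the gradient characterization $\mathcal{A}=\mathbb{M}^u(\mathcal{N})$, and \cite[Theorem 7.9.6]{Ch-La-10} gives the finite fractal dimension. The main technical obstacle throughout is handling the \emph{degenerate} viscoelastic operator simultaneously with \emph{critical} growth of $f$: one cannot rely on uniform coercivity of the memory term in $H^1_0$, and the complementarity condition \eqref{away} must be invoked carefully so that $b(x)u_t$ controls exactly the region where $a$ vanishes, a theme consistent with Remark \ref{r:degenerate}.
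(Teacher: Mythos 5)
This theorem is not proved in the present paper at all---it is quoted directly from \cite[Theorem 2.2]{CFM-16}---but the introduction describes that proof as precisely the route you take: the gradient structure coming from the Lyapunov functional \eqref{energy}, boundedness of the stationary set, and asymptotic smoothness via a quasi-stability (stabilizability) estimate with a compact seminorm, concluded through \cite[Proposition 7.9.4 and Theorem 7.9.6]{Ch-La-10}. Your outline therefore matches the intended argument in approach and in all essential steps.
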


The final two results here will be useful in the next section.
Each result follows from the existence of a (bounded) attractor in $\mathcal{H}^0$.
The first result provides a uniform bound on the mild solutions of problem P and some extremely important dissipation integrals, and the second provides the existence of an absorbing set in a natural way.

\begin{corollary}  \label{t:unif-bnd}
For each $R>0$ and every $U_0=(u_0,u_1,\eta_0)\in\mathcal{H}^0$ such that $\|U_0\|_{\mathcal{H}^0}\le R$, there holds, for all $t\ge0,$
\begin{equation}  \label{unif-bnd}
\|S(t)U_0\|_{\mathcal{H}^0}\le Q(R)
\end{equation}
for some positive nondecreasing function $Q(\cdot).$
In addition, there exists a function $Q(\cdot)$ such that 
\begin{equation}  \label{unif-bnd-2}
\int_0^\infty \left( \|\sqrt{b(x)}u_t(\tau)\|^2 + \delta\|\eta^\tau\|^2_{\mathcal{M}^0} \right) d\tau \le Q(R).
\end{equation}
Consequently, there also holds
\begin{equation}  \label{unif-bnd-3}
\int_0^\infty \|u_t(\tau)\|^2 d\tau \le Q(R).
\end{equation}
\end{corollary}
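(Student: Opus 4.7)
The three estimates all emanate from the energy functional $E(t)$ in \eqref{energy} together with the pointwise coercivity \eqref{away}, and the plan is to derive them in the order they are stated.

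For \eqref{unif-bnd}, I would use the preceding lemma's monotonicity $E(t)\le E(0)$ combined with a two-sided equivalence between $E$ and the phase-space norm. The upper bound $E(0)\le Q(R)$ comes from the quartic control of $F$ implied by \eqref{cons-f-1} together with the Sobolev embedding $H^1_0(\Omega)\hookrightarrow L^4(\Omega)$ in dimension three. In the opposite direction, combining \eqref{cons-f-3}, \eqref{ell-ineq} and \eqref{Poincare} produces a coercive lower bound of the form $E(t)+C\ge c\|U(t)\|^2_{\mathcal{H}^0}$. Chaining these inequalities yields \eqref{unif-bnd}.

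For \eqref{unif-bnd-2}, the main calculation is the basic energy identity
\[
\frac{d}{dt}E(t)=\int_0^\infty g'(s)\,\|\eta^t(s)\|^2_{\mathcal{V}^1_a}\,ds-2\|\sqrt{b(x)}\,u_t(t)\|^2,
\]
obtained by testing \eqref{pde-1} in $L^2(\Omega)$ against $u_t$, integrating by parts in $x$, and using the transport-with-source equation \eqref{pde-2}. The memory cross term $\int_0^\infty g(s)(a\nabla\eta^t(s),\nabla u_t)\,ds$ is absorbed via integration by parts in $s$, justified because $\eta^t(0)=0$ and $g(s)\|\eta^t(s)\|^2_{\mathcal{V}^1_a}\to0$ as $s\to\infty$ thanks to \eqref{kernel}. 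Invoking the pointwise kernel estimate $g'\le-\delta g$ from \eqref{kernel} to bound $\int_0^\infty g'\|\eta\|^2_{\mathcal{V}^1_a}\le -\delta\|\eta^t\|^2_{\mathcal{M}^0}$, integrating over $[0,\infty)$, and using the lower bound on $E$ from the previous step gives \eqref{unif-bnd-2}.

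Finally, \eqref{unif-bnd-3} follows by combining the outcome of \eqref{unif-bnd-2} with the pointwise inequality $c_0\le a(x)+b(x)$ from \eqref{away}, which yields
\[
c_0\int_0^\infty\|u_t(\tau)\|^2\,d\tau\le\int_0^\infty\big(\|\sqrt{a}\,u_t(\tau)\|^2+\|\sqrt{b}\,u_t(\tau)\|^2\big)\,d\tau.
\]
The $b$-weighted integral is exactly what \eqref{unif-bnd-2} provides. The $a$-weighted integral is the main obstacle: I plan to exploit the identity $\eta^t_s(s)=u_t(t-s)$ on $0\le s\le t$ from Proposition \ref{t:gen-T} together with the continuous embedding $\mathcal{V}^1_a\hookrightarrow L^2(\Omega)$ supplied by (H1), which yields the pointwise control $\|\sqrt{a}\,u_t(t-s)\|^2\le C\|\eta^t_s(s)\|^2_{\mathcal{V}^1_a}$. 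A Fubini exchange followed by the change of variable $\tau=t-s$ weighted by $g(s)$ should then reduce the $L^1_t$ integrability of $\|\sqrt{a}\,u_t\|^2$ to memory dissipation quantities already governed by \eqref{unif-bnd-2}, with the past-history contribution $s>t$ absorbed uniformly through the exponential decay of $g$ imposed by \eqref{kernel}.
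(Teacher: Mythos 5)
Your treatment of \eqref{unif-bnd} and \eqref{unif-bnd-2} is sound: the second part reproduces the paper's argument (energy identity from testing with $2u_t$, rewriting the memory cross term via \eqref{pde-2}, integrating by parts in $s$, and invoking $g'\le-\delta g$), and the first part is in fact more self-contained than the paper, which simply cites the existence of the global attractor for the uniform bound.

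The gap is in your argument for \eqref{unif-bnd-3}, specifically in the control of the $a$-weighted integral. From $\eta^t(s)=u(t)-u(t-s)$ one indeed gets $\eta^t_s(s)=u_t(t-s)$ for $0\le s\le t$, but then
\[
\|\eta^t_s(s)\|^2_{\mathcal{V}^1_a}=\int_\Omega a(x)\,|\nabla u_t(x,t-s)|^2\,dx ,
\]
which is a quantity one derivative above the finite-energy level: it involves $\nabla u_t$, whereas the mild solution of Theorem \ref{t:well-posedness} only gives $u_t\in C([0,\infty);L^2(\Omega))$ and the dissipation estimate \eqref{unif-bnd-2} only controls $\int_0^\infty\|\eta^\tau\|^2_{\mathcal{M}^0}\,d\tau$, i.e.\ the $\mathcal{M}^0$-norm of $\eta^\tau$ itself, not of $\eta^\tau_s$. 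So the quantity $\int_0^\infty g(s)\|\eta^t_s(s)\|^2_{\mathcal{V}^1_a}\,ds=\|\eta^t_s\|^2_{\mathcal{M}^0}$ to which your Fubini/change-of-variables step reduces the problem is \emph{not} ``already governed by \eqref{unif-bnd-2}''; it is not controlled by anything available at this stage (bounds on $\|\zeta^t_s\|_{\mathcal{M}^0}$ appear only much later, in Corollary \ref{t:part-z}, for the smoother component $w$ of a decomposed solution with special initial data). Also note that the pointwise inequality you want, $\int_\Omega a|\psi|^2\le C\int_\Omega a|\nabla\psi|^2$, is really the embedding $\mathcal{V}^1_a\hookrightarrow L^2(\Omega)$ applied to $\psi=u_t(t-s)$, which presupposes $u_t(t-s)\in\mathcal{V}^1_a$ --- again not guaranteed for mild solutions. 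The splitting $c_0\|u_t\|^2\le\|\sqrt{a}\,u_t\|^2+\|\sqrt{b}\,u_t\|^2$ via \eqref{away} is a natural opening move, but the $a$-weighted piece needs a genuinely different mechanism (e.g.\ an auxiliary multiplier of the type $\int_0^\infty g(s)\eta^t(s)\,ds$, as in the viscoelasticity literature). For what it is worth, the paper's own derivation of \eqref{unif-bnd-3} takes a different route (a mean-value-theorem argument producing $b(\xi_\tau)$ and an infimum $b_*$, together with an unstated hypothesis ``(H5)''), so neither argument as written closes this step; but the specific reduction you propose does fail for the reason above.
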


\begin{proof}
The first result is a consequence of the existence of a global/universal attractor.

To show \eqref{unif-bnd-2}, let $R>0$ be given and $U_0\in\mathcal{H}^0$ be such that $\|U_0\|_{\mathcal{H}^0}\le R.$ 
Next we formally derive the ``energy identity'' associated with problem P by multiplying \eqref{pde-1} by $2u_t$ to then integrate over $\Omega$; this yields (cf. \cite[Equation (3.7)]{CFM-16}), 
\begin{align}
\frac{d}{dt} E + 2\int_0^\infty g(s) \int_\Omega a(x)\nabla\eta^t(s)\cdot \nabla u_t dx ds + 2\|\sqrt{b(x)}u_t\|^2 = 0.  \label{ued-1}
\end{align}
where $E$ is the energy functional \eqref{energy}.
Observe, thanks to \eqref{unif-bnd}, \eqref{ell-ineq} and \eqref{cons-f-3}, we readily find $C(R)>0$ such that, for all $t\ge0,$ 
\begin{equation}  \label{ued-4}
|E(t)| \le C(R).  
\end{equation}
Next we note that with \eqref{problem-w}$_2$ there holds,
\begin{align}
2\int_0^\infty g(s) \int_\Omega a(x)\nabla\eta^t(s)\cdot \nabla u_t dx ds & = \frac{d}{dt}\|\eta^t\|^2_{\mathcal{M}^0} + \int_0^\infty g(s) \frac{d}{ds}\|\eta^t\|^2_{\mathcal{V}^1_a} ds,  \notag
\end{align}
and applying \eqref{kernel} yields, 
\begin{align}
\int_0^\infty g(s)\frac{d}{ds}\|\eta^t(s)\|^2_{\mathcal{V}^1_a} ds & = - \int_0^\infty g'(s)\|\eta^t(s)\|^2_{\mathcal{V}^1_a} ds  \notag \\ 
& \ge \delta\int_0^\infty g(s)\|\eta^t(s)\|^2_{\mathcal{V}^1_a} ds.  \label{ued-5}
\end{align}
Hence, we have
\begin{align}
\frac{d}{dt} E + \delta\|\eta^t\|^2_{\mathcal{M}^0} + 2\|\sqrt{b(x)}u_t\|^2 \le 0.  \label{ued-7}
\end{align}
Thus, integrating \eqref{ued-7} over $(0,t)$ produces \eqref{unif-bnd-2}.

Now we show \eqref{unif-bnd-3} easily follows from \eqref{unif-bnd-2}.
Indeed, using the Mean Value Theorem for Definite Integrals, for each $\tau\ge0$, there is $\xi_\tau\in\Omega$ so that 
\[
\|\sqrt{b(x)}u_t\|^2 = \int_\Omega b(x)|u_t(\tau)|^2 dx = b(\xi_\tau)\|u_t(\tau)\|^2.
\]
Now consider
\[
\int_0^\infty b(\xi_\tau)\|u_t(\tau)\|^2 d\tau = \int_0^\infty \|\sqrt{b(x)}u_t(\tau)\|^2 d\tau,
\]
and $b(x)\not\equiv0$ on $\Omega$, then $b(\xi_\tau)>0$ for each $\tau\ge0$.
Define $b_*:=\inf_{\tau\ge0}b(\xi_\tau)>0$.
So with \eqref{unif-bnd-2} we find
\[
\int_0^\infty \|u_t(\tau)\|^2 d\tau \le \frac{1}{b_*}Q(R).
\]
The thesis \eqref{unif-bnd-3} follows with hypotheses (H5).
The proof is complete.
\end{proof}

\begin{corollary}  \label{t:abs-set}
The semigroup of solution operators $S(t)$ admits a bounded absorbing set $\mathcal{B}$ in $\mathcal{H}^0$; that is, for any subset $B\subset \mathcal{H}^0$, there exists $t_B\ge0$ (depending on $B$) such that for all $t\ge t_B$, $S(t)B\subset\mathcal{B}$.
\end{corollary}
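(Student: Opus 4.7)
The plan is to extract a bounded absorbing set $\mathcal{B}$ directly from the global attractor $\mathcal{A}$ already furnished by Theorem \ref{t:global-attr}. This is the standard folklore consequence of having an attractor in hand, so the argument will be organizational rather than technical; no new PDE estimates are needed beyond what was used to produce $\mathcal{A}$ itself.

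First, I would invoke the fact that $\mathcal{A}$ is compact (hence bounded) in $\mathcal{H}^0$ and attracts every bounded set $B\subset\mathcal{H}^0$ in the Hausdorff semi-distance, i.e.\ $\operatorname{dist}_{\mathcal{H}^0}(S(t)B,\mathcal{A})\to 0$ as $t\to\infty$. The uniform boundedness of $S(t)B$ along the flow which is implicit here is in any case supplied by \eqref{unif-bnd} of Corollary \ref{t:unif-bnd}. Then I would take
\[
\mathcal{B} := \bigl\{\, U\in\mathcal{H}^0 : \operatorname{dist}_{\mathcal{H}^0}(U,\mathcal{A})\le 1 \,\bigr\},
\]
i.e.\ the closed unit $\mathcal{H}^0$-neighborhood of $\mathcal{A}$, which is bounded because $\mathcal{A}$ is. For any bounded $B\subset\mathcal{H}^0$, the attraction property yields $t_B\ge 0$ with $\operatorname{dist}_{\mathcal{H}^0}(S(t)B,\mathcal{A})\le 1$ for all $t\ge t_B$, which is precisely $S(t)B\subset\mathcal{B}$.

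There is essentially no hard step; the only care I would take is to read ``any subset $B\subset\mathcal{H}^0$'' in the statement as ``any \emph{bounded} subset,'' since absorption into a bounded set is only meaningful in that case. A more constructive alternative, which would bypass the attractor altogether, would start from the energy identity \eqref{ued-1} and the computations \eqref{ued-5}--\eqref{ued-7}, combine them with the lower bound \eqref{lower-e} and the dissipation integrals \eqref{unif-bnd-2}, and aim at a differential inequality of the form $\tfrac{d}{dt}E+\kappa E\le K$ from which an explicit absorbing ball would follow by Gronwall. Given that Theorem \ref{t:global-attr} is already at our disposal, however, the neighborhood-of-attractor argument is shorter and wholly sufficient for the subsequent regularity analysis of section \ref{s:reg}.
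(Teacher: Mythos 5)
Your proposal is correct and is essentially identical to the paper's own argument: both derive the absorbing set directly from the boundedness of $\mathcal{A}$ and its attraction property, the paper taking the ball of radius $\|\mathcal{A}\|_{\mathcal{H}^0}+1$ where you take the closed unit neighborhood of $\mathcal{A}$ (the former contains the latter, so the two choices are interchangeable). Your remark that ``any subset $B$'' should be read as ``any bounded subset'' is a fair and correct clarification.
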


\begin{proof}
The proof follows directly from the fact that the attractor $\mathcal{A}$ is bounded in $\mathcal{H}^0$; e.g., a ball in $\mathcal{H}^0$ of radius $\|\mathcal{A}\|_{\mathcal{H}^0}+1$ is an absorbing set in $\mathcal{H}^0$.
\end{proof}

\begin{remark}  \label{r:slow}
Unfortunately we do not know the rate of convergence of any bounded subset in $\mathcal{H}^0$ to the global attractor $\mathcal{A}.$
Moreover, there are several applications in the literature (not containing equations with degeneracies in crucial diffusion or damping terms) in which the rate of convergence of any bonded subset $B$ of $\mathcal{H}^0$ {\em{is}} exponential in the sense that there is a constant $\varpi>0$ such that for any nonempty bounded subset $B\subset\mathcal{H}^0$ and for all $t\ge0$ there holds, 
\[
{\mathrm{dist}}_{\mathcal{H}^0}(S(t)B,\mathcal{B}) \le Q(R)e^{-\varpi t}.
\]
Here, given two subsets $U$ and $V$ of a Banach space $X$, the {\em{Hausdorff semidistance}} between them is 
\[
{\mathrm{dist}}_{X}(U,V):=\sup_{u\in U}\inf_{v\in V}\|u-v\|_X.
\]
\end{remark}

\section{Regularity}  \label{s:reg}

The aim of this section, and indeed the aim of this article, is to show the existence of a smooth compact subset of $\mathcal{H}^0$ containing the global attractor $\mathcal{A}.$
This is achieved by finding a suitable subset $\mathcal{C}$ of $\mathcal{K}^1\hookrightarrow\mathcal{H}^0$; hence, $\mathcal{C}$ is compact in $\mathcal{H}^0.$
To this end we decompose the semigroup of solution operators by showing it splits into uniformly decaying to zero and uniformly compact parts.
With this we obtain asymptotic compactness for the associated semigroup of solution operators. 
The procedure requires some technical lemmas and a suitable Gr\"{o}nwall type inequality; the presentation follows \cite{Frigeri10,Gal-Shomberg15}.
The argument developed here will also be relied on to establish the existence of a compact attracting set. 
As a reminder to the reader, throughout this section (and the next) we assume the hypotheses (H1r), (H3r) and (H4r) hold in addition to (H1)-(H4).

The main result in this section is the following.

\begin{theorem}  \label{t:main-reg}
There exists a closed and bounded subset $\mathcal{C}\subset \mathcal{K}^1$ and a conatant $\omega>0$ such that for every nonempty bounded subset $B\subset \mathcal{H}^0$ and for all $t\ge0$, there holds
\begin{align}
{\mathrm{dist}}_{\mathcal{H}^0}(S(t)B,\mathcal{C}) \le Q(\|B\|_{\mathcal{H}^0})e^{-\omega t}.  \label{trans-1}
\end{align}
Consequently, the global attractor $\mathcal{A}$ (cf. Theorem \ref{t:global-attr}) is bounded in $\mathcal{K}^1$ and trajectories on $\mathcal{A}$ are regular solutions of the form
\begin{equation}  \label{regular}
u\in C([0,\infty);H^2(\Omega)), \quad u_t\in C([0,\infty);H^1(\Omega)) \quad \text{and} \quad \eta^t\in C([0,\infty);\mathcal{T}^1).
\end{equation}
\end{theorem}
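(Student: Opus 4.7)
The plan is to perform the standard splitting $S(t)U_0 = \Sigma(t)U_0 + \Pi(t)U_0$ along the lines of \cite{Frigeri10, Gal-Shomberg15}, where $\Sigma(t)U_0$ decays exponentially to zero in $\mathcal{H}^0$ and $\Pi(t)U_0$ lies in a uniformly bounded subset of the compactly embedded space $\mathcal{K}^1$. Fix $U_0 \in B$ with $\|B\|_{\mathcal{H}^0}\le R$, and choose $\phi > \vartheta$ (using (H4r)) so that $s \mapsto \phi s + f(s)$ is nondecreasing. Write $u = v + w$, with past histories $\zeta^t$ and $\xi^t$, where $v$ solves the linear equation
\[
v_{tt} - \div[(1-k_0 a(x))\nabla v] - \int_0^\infty g(s)\,\div[a(x)\nabla \zeta^t(s)]\,ds + b(x)v_t + \phi v = 0, \qquad \zeta_t=-\zeta_s+v_t,
\]
subject to the full initial data $U_0$, and $w$ satisfies the same linear operator with forcing $\phi u - f(u) + h$ and zero initial data. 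Set $\Sigma(t)U_0:=(v,v_t,\zeta^t)$ and $\Pi(t)U_0:=(w,w_t,\xi^t)$.

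For the decay part, multiply the $v$-equation by $v_t + \varepsilon v$ (with $\varepsilon>0$ small), test the memory equation in $\mathcal{M}^0$, and combine \eqref{kernel}--\eqref{ued-5}, the coercivity \eqref{ell-ineq}, the frictional dissipation $\|\sqrt{b(x)}v_t\|^2$ with (H2), and the extra mass term $\phi v$ (which supplies coercivity on the complement of the support of $b$ where the viscoelastic damping degenerates). This yields a differential inequality of the form $\tfrac{d}{dt}\mathcal{E}_v + 2\omega \mathcal{E}_v \le 0$ with $\mathcal{E}_v$ equivalent to $\|(v,v_t,\zeta^t)\|_{\mathcal{H}^0}^2$, so Grönwall gives $\|\Sigma(t)U_0\|_{\mathcal{H}^0} \le Q(R)e^{-\omega t}$.

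For the compact part, it suffices to derive a uniform $\mathcal{K}^1$-bound on $\Pi(t)U_0$. Differentiating the $w$-equation in time and running the analogous energy estimate on $(w_t,w_{tt},\xi^t_t)$, the source term $\phi u_t - f'(u) u_t$ is controlled via \eqref{cons-f-1} and the Sobolev embedding $H^1_0(\Omega) \hookrightarrow L^6(\Omega)$ in dimension three, together with the uniform bound $\|u(t)\|_{H^1_0}\le Q(R)$ from Corollary \ref{t:unif-bnd}; the monotonicity $\phi + f' \ge 0$ arranged above prevents the critical reaction term from violating the estimate. Once $\|w_t(t)\|_{H^1} + \|w_{tt}(t)\|$ is bounded, the elliptic regularity \eqref{reg-est} applied to the $w$-equation (with $w_{tt}$, $\phi u - f(u) + h$ and the memory convolution treated as $L^2$-data) yields $\|w(t)\|_{H^2}\le Q(R)$. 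The tail estimate $\sup_{\tau\ge 1}\tau\mathbb{T}(\tau;\xi^t)\le Q(R)$ then follows from Lemma \ref{what-2} (applied to $\xi^t$ with zero initial tail), placing $\xi^t$ in a bounded subset of $\mathcal{T}^1$. Taking $\mathcal{C}$ to be a closed ball in $\mathcal{K}^1$ of radius $Q(R)$ yields \eqref{trans-1}, and the invariance of $\mathcal{A}$ then forces $\mathcal{A}\subset\mathcal{C}$ and, via the Sobolev embeddings, the regularity \eqref{regular}.

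The main obstacle is the critical growth \eqref{assm-f-1}: naive differentiation of the original $u$-equation does not close at the $\mathcal{H}^1$-level since the cubic-type term $f'(u)u_t$ sits exactly on the borderline in $\mathbb{R}^3$. Splitting off $w$ from $u$ is essential precisely because the extra monotonicity hypothesis (H4r) is then used to absorb the bad sign of $f$ when closing the higher-order estimate, with the uniform $\mathcal{H}^0$-bounds of Corollary \ref{t:unif-bnd} keeping the cubic constants under control. A secondary technical point, already flagged after Proposition \ref{t:gen-T}, is that $\mathcal{H}^1\hookrightarrow\mathcal{H}^0$ is not compact; this is exactly why the tail-function norm \eqref{new-norm} and Lemma \ref{what-2} must be invoked rather than a plain $\mathcal{M}^1$-bound on $\xi^t$.
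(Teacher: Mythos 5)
Your overall architecture (split $S(t)$ into an exponentially decaying part and a part bounded in $\mathcal{K}^1$, use (H4r) for monotonicity, use the tail function and Lemma \ref{what-2} to recover compactness of the memory component) matches the paper, but the specific decomposition you chose breaks down at the critical growth \eqref{assm-f-1}, and this is a genuine gap rather than a detail. You place the entire nonlinearity into the $w$-equation as an \emph{external forcing} $\phi u - f(u) + h$, so that the $w$-equation is linear in $w$. When you then differentiate in time to estimate $(w_t,w_{tt},\xi^t_t)$, the forcing contributes $\phi u_t - f'(u)u_t$, and since $|f'(s)|\le C(1+|s|^2)$ one only gets $f'(u)\in L^3(\Omega)$ from $u\in H^1_0(\Omega)\hookrightarrow L^6(\Omega)$, hence $f'(u)u_t\in L^{6/5}(\Omega)$ with $u_t$ merely in $L^2(\Omega)$. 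Pairing this against $w_{tt}\in L^2(\Omega)$ in the energy identity does not close: one would need $f'$ bounded (essentially linear growth of $f$). Moreover, your appeal to ``the monotonicity $\phi+f'\ge0$'' is vacuous in your splitting, because $f$ never appears as a function of the unknown $w$; there is no sign to exploit. The paper avoids exactly this by attaching the nonlinearity to the smooth component: with $\psi(s)=f(s)+\beta s$, $\psi'\ge0$ as in \eqref{psi}, problem W carries $\psi(w)$ with forcing $h+\beta u$ (see \eqref{problem-w}) and problem V carries the difference $\psi(u)-\psi(w)$. Then the time-differentiated W-equation contains $\psi'(w)H$ with a favorable sign, the only external forcing is $\beta u_t\in L^2(\Omega)$, and the remaining commutator $(\psi''(w)w_t,H^2)$ is estimated by $C(R)\|w_t\|\,\|\nabla H\|^2$ and absorbed through Proposition \ref{GL2} using the dissipation integral $\int_0^\infty\|w_t\|^2\,d\tau\le Q(R)$ from Lemma \ref{t:ride}. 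The price is that problem V is no longer linear, and its exponential decay requires the more delicate Gr\"onwall argument of Proposition \ref{GL} with the integrals \eqref{wed-0}; that trade is unavoidable here.

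A second, smaller gap: you claim the $H^2$ bound on $w$ follows from elliptic regularity \eqref{reg-est} ``with the memory convolution treated as $L^2$-data.'' This is circular, since the convolution $\int_0^\infty g(s)\div[a(x)\nabla\zeta^t(s)]\,ds$ is not known to lie in $L^2(\Omega)$ until one has $\zeta^t\in\mathcal{M}^1$, which is itself part of the desired $\mathcal{K}^1$ bound. The paper instead runs a coupled higher-order energy estimate, multiplying the rewritten equation \eqref{qww-1} by $(-\Delta)w_t+(-\Delta)w$ so that $\|\Delta w\|^2$, $\|\nabla w_t\|^2$ and $\|\zeta^t\|^2_{\mathcal{M}^1}$ are controlled simultaneously in the functional $\Phi$ of \eqref{qww-3.2}; this step also needs the auxiliary bounds \eqref{lpp-12}, \eqref{part-z} and \eqref{ccc-2} on $\zeta^t_t$, $\zeta^t_s$ and $\nabla\zeta^t$, none of which appear in your outline. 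The final assembly (closed ball $\mathcal{C}$ in $\mathcal{K}^1$, invariance of $\mathcal{A}$, Lemma \ref{what-2} for the tail) is fine once those estimates are in place.
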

\noindent The proof first requires several lemmas.

Set 
\begin{equation}  \label{psi}
\psi(s):=f(s)+\beta s \quad \text{with} \quad \beta \ge \vartheta \quad \text{so that} \quad \psi'(s) \ge 0
\end{equation}
and set $\Psi(s):=\int_0^s\psi(\sigma)d\sigma.$
(We remind the reader of \eqref{assm-f-3}.)
Let $U_0=(u_0,u_1,\eta_0)\in\mathcal{H}^0$.
Decompose \eqref{pde-1}-\eqref{ic} into the functions $v$, $w$, $\xi$ and $\zeta$ where $v+w=u$ and $\xi+\zeta=\eta$ satisfy, respectively, problem V and problem W which are given by
\begin{equation}
\left\{ \begin{array}{ll} v_{tt} - \div[(1-k_0a(x))\nabla v] - \displaystyle\int_0^\infty g(s)\div[a(x)\nabla\xi^t(s)]ds + b(x)v_t + \psi(u) - \psi(w) = 0 & \text{in}\ \Omega\times\mathbb{R}^+ \\
\xi^t_t = -\xi^t_s + v_t & \text{in}\ \Omega\times\mathbb{R}^+ \\ 
v(x,t) = 0, \quad \xi^t(x,s) = 0 & \text{on}\ \Gamma\times\mathbb{R}^+ \\ 
v(x,0) = u_0(x), \quad v_t(x,0) = u_1(x), \quad \xi^t(x,0) = 0, \quad \xi^0(x,s) = \eta_0(x,s) & \text{at}\ \Omega\times\{0\} 
\end{array}\right.  \label{problem-v}
\end{equation}
and
\begin{equation}
\left\{ \begin{array}{ll} w_{tt} - \div[(1-k_0a(x))\nabla w] - \displaystyle\int_0^\infty g(s)\div[a(x)\nabla\zeta^t(s)]ds + b(x)w_t + \psi(w) = h(x) + \beta u & \text{in}\ \Omega\times\mathbb{R}^+ \\ 
\zeta^t_t = -\zeta^t_s + w_t & \text{in}\ \Omega\times\mathbb{R}^+ \\ 
w(x,t) = 0, \quad \zeta^t(x,s) = 0 & \text{on}\ \Gamma\times\mathbb{R}^+ \\ 
w(x,0) = 0, \quad w_t(x,0) = 0, \quad \zeta^t(x,0) = 0, \quad \zeta^0(x,s) = 0 & \text{at}\ \Omega\times\{0\}.
\end{array}\right.  \label{problem-w}
\end{equation}
We now define the operators $K(t)U_0:=(w(t),w_t(t),\zeta^t)$ and $Z(t)U_0:=(v(t),v_t(t),\xi^t)$ using the associated global mild solutions to problem V and problem W (the existence of such solutions follows in a similar manor to the semigroup methods used to establish the well-posedness for problem P; cf. Theorem \ref{t:well-posedness} and the regularity described in \eqref{mild-reg}).

The first of the subsequent lemmas shows that the operators $K(t)$ are bounded bounded on $\mathcal{H}^0.$

The following lemma provides an estimate that will be extremely important later in this section.

\begin{lemma} \label{t:ride}
For each $U_0=(u_0,u_1,\eta_0)\in\mathcal{H}^0$ there exists a unique global weak solution 
\begin{equation}
W:=(w,w_t,\zeta^t)\in C([0,\infty);\mathcal{H}^0)  \label{wer-0}
\end{equation}
to problem W.
Moreover, for each $R>0$ and for all $U_0\in\mathcal{H}^0$ with $\|U_0\|_{\mathcal{H}^0}\le R$, there holds, for all $t\ge0,$
\begin{equation}  \label{wer-1}
\|K(t)U_0\|_{\mathcal{H}^0}\le Q(R)
\end{equation}
for some nonnegative increasing function $Q(\cdot).$
There also holds
\begin{equation}  \label{wer-2}
\int_0^\infty \|w_t(\tau)\|^2 d\tau \le Q(R).
\end{equation}
In addition, for every $\ep>0$ there exists a function $Q_\ep(\cdot)\sim\ep^{-1}$ such that for every $0\le s\le t$, $R>0$ and $U_0=(u_0,u_1,\eta_0)\in\mathcal{H}^0$ with $\|U_0\|_{\mathcal{H}^0}\le R,$ there holds 
\begin{align}
\int_s^t & \left( \|u_t(\tau)\|^2 + \|\sqrt{b(x)}u_t(\tau)\|^2 + \delta\|\eta^\tau\|^2_{\mathcal{M}^0} + \|w_t(\tau)\|^2 + \|\sqrt{b(x)}w_t(\tau)\|^2 + \delta\|\zeta^\tau\|^2_{\mathcal{M}^0} \right) d\tau \notag \\
& \le \frac{\ep}{2}(t-s) + Q_\ep(R).  \label{wed-0}
\end{align}
Finally, there holds
\begin{align}
\int_t^{t+1} & \left( \|u_t(\tau)\|^2 + \delta\|\eta^\tau\|^2_{\mathcal{M}^0} + \|\sqrt{b(x)}w_t(\tau)\|^2 + \|w_t(\tau)\|^2 + \delta\|\zeta^\tau\|^2_{\mathcal{M}^0} \right) d\tau \le Q(R).  \label{star}
\end{align}
\end{lemma}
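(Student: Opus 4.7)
The plan is to treat problem W as an abstract Cauchy problem on $\mathcal{H}^0$ with the same generator $\mathcal{L}$ as in \eqref{acp} and with the modified nonlinear forcing $(0,-\psi(w)+h(x)+\beta u(t),0)$. Since $\psi$ inherits the growth bound \eqref{cons-f-1} from $f$, and $u(t)$ is the already-constructed mild solution from Theorem \ref{t:well-posedness} (so that $h + \beta u \in C([0,\infty);L^2(\Omega))$ by \eqref{mild-reg}), the existence of a unique global mild solution $W \in C([0,\infty);\mathcal{H}^0)$ follows from essentially the same semigroup arguments that produced \eqref{mild-reg}, delivering \eqref{wer-0}.

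To extract quantitative estimates I would introduce the energy functional
\begin{equation*}
E_W(t) := \|w_t\|^2 + \int_\Omega (1-k_0 a(x))|\nabla w|^2 dx + \|\zeta^t\|^2_{\mathcal{M}^0} + 2\int_\Omega \Psi(w) dx,
\end{equation*}
multiply \eqref{problem-w}$_1$ by $2w_t$ and integrate over $\Omega$, handling the memory term exactly as in the derivation of \eqref{ued-7} (via \eqref{problem-w}$_2$ and the dissipation inequality \eqref{ued-5} that follows from \eqref{kernel}) to arrive at
\begin{equation*}
\frac{d}{dt} E_W + 2\|\sqrt{b(x)}w_t\|^2 + \delta\|\zeta^t\|^2_{\mathcal{M}^0} \le 2\int_\Omega \bigl(h(x) + \beta u(t)\bigr) w_t\, dx.
\end{equation*}
By \eqref{ell-ineq}, \eqref{cons-f-3} and the monotonicity of $\psi$, $E_W$ is coercive in the sense that $E_W(t) \ge c\|K(t)U_0\|^2_{\mathcal{H}^0} - C$ for some constants $c,C>0$. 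For the uniform bound \eqref{wer-1}, I would integrate the energy inequality over $[0,t]$, use $E_W(0) = 0$ (since $w_0=w_1=\zeta_0=0$), and then integrate the right-hand side by parts in time to trade $w_t$ for $u_t$, which is square-integrable by \eqref{unif-bnd-3}; the resulting boundary term $2\int (h+\beta u(t)) w(t)\,dx$ is absorbed via Young and Poincaré into a small fraction of $E_W(t)$, and the integrable $\|u_t\|^2$ contribution is handled by Gr\"onwall. If the Gr\"onwall loop fails to close directly, a safe fallback is the triangle inequality $\|K(t)U_0\|_{\mathcal{H}^0} \le \|S(t)U_0\|_{\mathcal{H}^0} + \|Z(t)U_0\|_{\mathcal{H}^0}$, where the $Z$-term is controlled by an analogous energy argument in which the monotonicity of $\psi$ produces a genuinely non-positive contribution from $\psi(u)-\psi(w)$ multiplied by $v_t=u_t-w_t$.

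Once \eqref{wer-1} is in hand, integrating the energy inequality over $[0,\infty)$ yields $\int_0^\infty (\|\sqrt{b}w_t\|^2 + \delta\|\zeta^\tau\|^2_{\mathcal{M}^0})\,d\tau \le Q(R)$, and \eqref{wer-2} then follows from the Mean Value Theorem for Definite Integrals exactly as in the proof of \eqref{unif-bnd-3}. Estimate \eqref{wed-0} is now immediate: each of the six integrands has a bounded total integral over $[0,\infty)$ by \eqref{unif-bnd-2}, \eqref{unif-bnd-3}, \eqref{wer-2}, and the dissipation bound just established, so the left-hand side of \eqref{wed-0} is bounded by some $Q(R)$ independent of $s,t$, and one can then take $Q_\ep(R) := \ep^{-1}Q(R)$ to match the claimed $\ep^{-1}$ scaling (the $\tfrac{\ep}{2}(t-s)$ is purely slack). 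Estimate \eqref{star} is the same bound specialized to unit intervals. The main obstacle is the Gr\"onwall step for \eqref{wer-1}: since $h+\beta u$ is non-dissipative and the damping $\|\sqrt{b}w_t\|^2$ does not control $\|w_t\|^2$ pointwise (because $b$ is allowed to degenerate), one must carefully exploit the $L^2$-integrability of $u_t$ (or the decomposition fallback) to prevent $E_W$ from growing exponentially in time.
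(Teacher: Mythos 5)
Your overall strategy---the energy functional $E_W$, the differential inequality $\frac{d}{dt}E_W + 2\|\sqrt{b(x)}w_t\|^2 + \delta\|\zeta^t\|^2_{\mathcal{M}^0} \le 2(h+\beta u, w_t)$, and the device of trading $(u,w_t)$ for $(u_t,w)$ through the time derivative of $(u,w)$---is essentially the paper's, and your treatment of existence, of \eqref{wer-2}, and of \eqref{star} as a specialization of \eqref{wed-0} is fine. The genuine gap is in your justification of \eqref{wed-0}. You assert that ``each of the six integrands has a bounded total integral over $[0,\infty)$'' and that the term $\frac{\ep}{2}(t-s)$ is ``purely slack.'' This is unjustified for $\delta\|\zeta^\tau\|^2_{\mathcal{M}^0}$ (and not directly available for $\|\sqrt{b(x)}w_t\|^2$ except by routing through \eqref{wer-2}): integrating the energy inequality over $(s,t)$, the right-hand side contributes, after your own integration by parts, the term $2\beta\int_s^t(u_t,w)\,d\tau$, and since only $\int_0^\infty\|u_t\|^2\,d\tau\le Q(R)$ is available (not $\int_0^\infty\|u_t\|\,d\tau$), Cauchy--Schwarz gives at best $2\beta\int_s^t\|u_t\|\,\|w\|\,d\tau \le Q(R)\sqrt{t-s}\le \frac{\ep}{2}(t-s)+Q_\ep(R)$. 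So $\int_0^\infty\|\zeta^\tau\|^2_{\mathcal{M}^0}\,d\tau$ is \emph{not} known to be finite, and the linear-in-$(t-s)$ term in \eqref{wed-0} is doing real work: it is exactly what forces the later use of Proposition \ref{GL}, which tolerates $\int_s^t h(\tau)\,d\tau\le\eta(t-s)+m$, in place of an ordinary Gr\"onwall lemma. The paper obtains \eqref{wed-0} by precisely this route: write $2\beta(u,w_t)$ as a total derivative of $2\beta(u,w)$ plus $(u_t,w)$-terms, estimate $2\beta|(u_t,w)|\le\ep+C_\ep\|u_t\|^2$ using \eqref{wer-1}, then integrate and invoke \eqref{unif-bnd-2}--\eqref{unif-bnd-3}.

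Two smaller points. First, for \eqref{wer-1} the paper simply invokes the boundedness coming from the existence of the global attractor (Corollary \ref{t:unif-bnd}); your primary Gr\"onwall route does not close for the reason you yourself flag (the damping degenerates and $\|u_t\|$ is only square-integrable, so one gets growth in $t$), and your fallback contains an error: monotonicity of $\psi$ gives $(\psi(u)-\psi(w),v)\ge0$, not a sign for $(\psi(u)-\psi(w),v_t)$---already for $\psi(s)=s$ that product equals $\frac{1}{2}\frac{d}{dt}\|v\|^2$, which has no sign. Second, the fallback via $\|K(t)U_0\|_{\mathcal{H}^0}\le\|S(t)U_0\|_{\mathcal{H}^0}+\|Z(t)U_0\|_{\mathcal{H}^0}$ is circular in the paper's logical order, since the decay of $Z(t)$ (Lemma \ref{t:uniform-decay}) is proved \emph{using} \eqref{wer-1} and \eqref{wed-0}.
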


\begin{proof}
As we have already stated above, the existence of global mild solutions satisfying \eqref{wer-0} follows by arguing as in the proof of Theorem \ref{t:well-posedness}.
The bound \eqref{wer-1} essentially follows from the existence of a global attractor for problem P (cf. Corollary \ref{t:unif-bnd}).
The dissipation property \eqref{wer-2} follows by arguing exactly as in the proof of Corollary \ref{t:unif-bnd} keeping in mind both $u^{(1)}$ and $u^{(b)}$ make sense, and that we are able to utilize the bound \eqref{unif-bnd-3} for either one.

We are now interested in establishing \eqref{wed-0}.
Indeed, multiplying \eqref{problem-w}$_1$ by $2w_t$ and integrating over $\Omega$, applying \eqref{problem-w}$_2$ and applying an estimate like \eqref{ued-5}, all with $w$ and $\zeta$ in place of $u$ and $\eta$, respectively, and $E_w$ denoting the corresponding functional $E$, produces (in place of \eqref{ued-7})
\begin{align}
\frac{d}{dt} & E_w + \delta\|\zeta\|^2_{\mathcal{M}^0} + 2\|\sqrt{b(x)}w_t\|^2 \le 2\beta(u,w_t).  \label{wed-1}
\end{align}
Since
\begin{align}
2\beta(u,w_t) = 2\beta(u_t,w) + 2\beta\frac{d}{dt}(u,w) \notag
\end{align}
and by \eqref{wer-1}
\begin{align}
2\beta(u_t,w) & \le \beta^2C(R)\|u_t\| \notag \\ 
& \le \ep + C_\ep\|u_t\|^2, \notag
\end{align}
so the differential inequality \eqref{wed-1} becomes
\begin{align}
\frac{d}{dt} & \{ E_w - 2\beta(u,w) \} + \delta\|\zeta^\tau\|^2_{\mathcal{M}^0} + 2\|\sqrt{b(x)}w_t\|^2  \le \ep + C_\ep\|u_t\|^2.  \label{wed-3}
\end{align}
In light of \eqref{unif-bnd-2} and \eqref{unif-bnd-3}, adding $\|u_t\|^2+\|\sqrt{b(x)}u_t(\tau)\|^2 + \delta\|\eta^\tau\|^2_{\mathcal{M}^0}$ to both sides of \eqref{wed-3} and integrating the result over $(s,t)$ then applying \eqref{unif-bnd}, \eqref{wer-1} and \eqref{ued-4} for problem W produces the desired estimate \eqref{wed-0}.

To show \eqref{star}, we now add in the bound $\|u_t\|^2+\delta\|\eta\|^2_{\mathcal{M}^0}+2\|w_t\|^2\le C(R)$ into \eqref{wed-1}, and this time estimate the right-hand side with $C(R)+\|w_t\|^2$ to obtain
\begin{align}
\frac{d}{dt} & E_w + \|u_t(\tau)\|^2 + \delta\|\eta^\tau\|^2_{\mathcal{M}^0} + \|\sqrt{b(x)}w_t(\tau)\|^2 + \|w_t(\tau)\|^2 + \delta\|\zeta^\tau\|^2_{\mathcal{M}^0} \le C(R).  \label{wed-99}
\end{align}
Integrating \eqref{wed-99} over $(t,t+1)$ and applying \eqref{ued-4} for problem W yields \eqref{star}.
\end{proof}

\begin{lemma}  \label{t:uniform-decay}
For each $U_0=(u_0,u_1,\eta_0)\in\mathcal{H}^0$ there exists a unique global weak solution 
\begin{equation}
V:=(v,v_t,\xi^t)\in C([0,\infty);\mathcal{H}^0)  \label{opp-0}
\end{equation}
to problem V.
Moreover, for each $R>0$ and for all $U_0\in\mathcal{H}^0$ with $\|U_0\|_{\mathcal{H}^0}\le R$, there exists $\omega_1>0$ such that, for all $t\geq 0$, 
\begin{equation}
\|Z(t)U_{0}\|_{\mathcal{H}^0}\le Q(R)e^{-\omega_1 t}  \label{opp-1}
\end{equation}
for some positive nondecreasing function $Q(\cdot).$
Thus, the operators $Z(t)$ are uniformly decaying to zero in $\mathcal{H}^0$.
\end{lemma}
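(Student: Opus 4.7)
The plan is to construct a Lyapunov functional for problem V that satisfies a Grönwall-type inequality with a perturbation driven by $\|w_t(\tau)\|^2$, which is integrable in time by \eqref{wer-2}; this will force exponential decay. Global existence of a mild solution $V\in C([0,\infty);\mathcal{H}^0)$ is immediate from the semigroup argument behind Theorem \ref{t:well-posedness}, since $\psi(u)-\psi(w)=\psi(w+v)-\psi(w)$ is locally Lipschitz on $\mathcal{H}^0$ with constant controlled by the a priori bounds \eqref{unif-bnd} and \eqref{wer-1}.

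First I would set
\begin{equation*}
\mathcal{E}_v(t) := \|v_t\|^2 + \int_\Omega(1-k_0a(x))|\nabla v|^2\, dx + \|\xi^t\|^2_{\mathcal{M}^0}
\quad\text{and}\quad
\mathcal{J}(t) := \int_\Omega\bigl[\Psi(w+v)-\Psi(w)-\psi(w)v\bigr] dx,
\end{equation*}
noting that $\mathcal{E}_v\sim\|Z(t)U_0\|^2_{\mathcal{H}^0}$ by \eqref{ell-ineq} and $\mathcal{J}\ge 0$ by convexity of $\Psi$ (since $\psi'\ge 0$ by \eqref{psi}). Multiplying \eqref{problem-v}$_1$ by $2v_t$, handling the memory term as in the derivation of \eqref{ued-5}, and using the identity $\frac{d}{dt}\mathcal{J}=(\psi(u)-\psi(w),v_t)+(\psi(u)-\psi(w)-\psi'(w)v,w_t)$ to re-express the awkward nonlinear product, I obtain
\begin{equation*}
\frac{d}{dt}\bigl(\mathcal{E}_v+2\mathcal{J}\bigr) + 2\|\sqrt{b(x)}v_t\|^2 + \delta\|\xi^t\|^2_{\mathcal{M}^0} \le 2\bigl|(\psi(u)-\psi(w)-\psi'(w)v,w_t)\bigr|.
\end{equation*}
To promote $\|\nabla v\|^2$ into the dissipation, I also multiply \eqref{problem-v}$_1$ by $2\epsilon v$: since $(\psi(u)-\psi(w),v)\ge 0$ (again from $\psi'\ge 0$), standard Cauchy--Schwarz/Young estimates on the memory cross-term and on $(bv_t,v)$ yield an inequality of the form $\frac{d}{dt}[2\epsilon(v_t,v)] + 2\epsilon\ell_0\|\nabla v\|^2 \le C\epsilon\bigl(\|v_t\|^2+\|\xi^t\|^2_{\mathcal{M}^0}\bigr)$. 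Adding the two and setting $\Lambda(t):=\mathcal{E}_v(t)+2\mathcal{J}(t)+2\epsilon(v_t(t),v(t))$ with $\epsilon>0$ sufficiently small (so that $\Lambda\sim\mathcal{E}_v$), I expect $\frac{d}{dt}\Lambda + \omega_0\Lambda \le 2\bigl|(\psi(u)-\psi(w)-\psi'(w)v,w_t)\bigr|$ for some $\omega_0>0$.

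To close the estimate, the critical growth \eqref{assm-f-1} gives $|\psi''|\le C(1+|s|)$, so Taylor's theorem combined with Hölder, the Sobolev embedding $H^1_0\hookrightarrow L^6$, and the $R$-dependent bounds on $u,w$ in $H^1_0$ (from \eqref{unif-bnd} and \eqref{wer-1}) produces
\begin{equation*}
|(\psi(u)-\psi(w)-\psi'(w)v,w_t)| \le C(R)\|\nabla v\|^2\|w_t\|;
\end{equation*}
Young's inequality with a small parameter then absorbs a multiple of $\|\nabla v\|^2$ into the coercivity of $\Lambda$ and leaves $C(R)\|w_t\|^2\Lambda$ on the right. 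Grönwall's lemma applied to $\frac{d}{dt}\Lambda + \omega\Lambda \le C(R)\|w_t\|^2\Lambda$, together with $\int_0^\infty\|w_t(\tau)\|^2 d\tau \le Q(R)$ from \eqref{wer-2}, then yields $\Lambda(t)\le Q(R)e^{-\omega t}$, which by equivalence gives \eqref{opp-1}. The principal obstacle is precisely this remainder term: because $f$ is critical, the integrand $(\psi(u)-\psi(w)-\psi'(w)v)w_t$ is cubic in $v$ and cannot be controlled pointwise in $t$, so only the integral-in-time dissipation $\|w_t\|\in L^2(0,\infty)$ inherited from the existence of the global attractor makes the Grönwall closure possible; all remaining steps are routine manipulations adapted to the degenerate coefficient $1-k_0a(x)\ge\ell_0>0$.
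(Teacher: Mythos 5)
Your proposal is correct and shares the paper's overall strategy: multiply \eqref{problem-v}$_1$ by $v_t+\ep v$, build a Lyapunov functional equivalent to $\|V\|^2_{\mathcal{H}^0}$ that carries a nonlinear correction, reduce to a perturbed Gr\"onwall inequality whose perturbation is integrable in time thanks to the dissipation integrals inherited from the attractor, and conclude exponential decay. The genuine difference lies in the nonlinear correction and the Gr\"onwall variant. The paper augments the energy by $2(\psi(u)-\psi(w),v)-(\psi'(u)v,v)$, whose sign is only recovered by taking $\beta=\beta(\ep)$ large (cf.\ the step combining \eqref{opp-5}--\eqref{opp-7}), and its time derivative produces two remainders, $-((\psi'(u)-\psi'(w))w_t,v)$ and $\tfrac12(\psi''(u)u_t,v^2)$, so the resulting inequality \eqref{opp-15} carries both $\|u_t\|^2$ and $\|w_t\|^2$ plus an additive $3\ep$; this forces the use of the Pata--Zelik lemma (Proposition \ref{GL}) together with the linear-growth dissipation bound \eqref{wed-0}. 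You instead use the convexity term $\mathcal{J}=\int[\Psi(u)-\Psi(w)-\psi(w)v]\ge0$, which is nonnegative for free once $\psi'\ge0$, whose derivative leaves the single remainder $(\psi(u)-\psi(w)-\psi'(w)v,w_t)$; your Taylor/H\"older/$L^6$ estimate $C(R)\|\nabla v\|^2\|w_t\|$ is the correct critical-growth bound and matches \eqref{opp-11}--\eqref{opp-12} in spirit. Because your perturbation is exactly $C(R)\|w_t\|^2\Lambda$ with no additive constant, the plain integrating-factor Gr\"onwall plus \eqref{wer-2} suffices, which is a small simplification. Two points you should make explicit to fully close the argument: (i) the decay of $\Lambda$ must control $2\mathcal{J}$ as well as $\mathcal{E}_v$; this follows either by retaining the nonnegative term $2\ep(\psi(u)-\psi(w),v)\ge 2\ep\mathcal{J}$ in the dissipation or by noting $\mathcal{J}\le C(R)\|\nabla v\|^2$, and in either case the resulting rate $\omega_1$ depends on $R$ (as does the paper's $m_1$ through \eqref{opp-4}, so this is consistent with the statement); (ii) the memory cross-term from the $\ep v$ multiplier leaks an $\ep^{3/2}\|\nabla v\|^2$ and a $\sqrt{\ep}\|\xi^t\|^2_{\mathcal{M}^0}$ term that must be absorbed by $2\ep\ell_0\|\nabla v\|^2$ and $\delta\|\xi^t\|^2_{\mathcal{M}^0}$ respectively, exactly as in \eqref{opp-14.3}--\eqref{opp-14.5}.
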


\begin{proof}
As we have already stated above, the existence of global mild solutions satisfying \eqref{opp-0} follows by arguing as in the proof of Theorem \ref{t:well-posedness}.
It suffices to show \eqref{opp-1}.

Let $R>0$ and $U_0=(u_0,u_1,\eta_0)\in\mathcal{H}^0$ be such that $\|U_0\|_{\mathcal{H}^0}\le R.$
Next we rewrite the term $b(x)v_t$ in equation \eqref{problem-v}$_1$ as $(b(x)+1)v_t- v_t$. 
Then multiply the result in $L^2(\Omega)$ by $v_t+\ep v$, where $\ep>0$ will be chosen below.
When we include the basic identity
\begin{align}
(\psi(u)-\psi(w),v_t) & = \frac{d}{dt}\left\{(\psi(u)-\psi(w),v)-\frac{1}{2}(\psi'(u)v,v) \right\}  \notag \\
& - ((\psi'(u)-\psi'(w))w_t,v) + \frac{1}{2}(\psi''(u)u_t,v^2)  \notag
\end{align}
to the result and use \eqref{problem-v}$_2$, we find that there holds, for almost all $t\ge0,$ 
\begin{align} 
\frac{d}{dt} & \left\{ \|v_t\|^2 + 2\ep (v_t,v) + \int_\Omega (1-k_0a(x))|\nabla v|^2 dx + \|\xi^t\|^2_{\mathcal{M}^0} + \ep\|\sqrt{b(x)}v\|^2 \right.  \notag \\ 
& \left. + 2(\psi(u)-\psi(w),v) - (\psi'(u)v,v) \right\}  \notag \\ 
& - 2\ep\|v_t\|^2 + 2\ep\int_\Omega (1-k_0a(x)) |\nabla v|^2 dx - \int_0^\infty g'(s)\|\xi^t(s)\|^2_{\mathcal{V}^1_a} ds  \notag \\ 
& + 2\ep\int_0^\infty g(s) \int_\Omega a(x) \nabla\xi^t(s)\cdot\nabla v dx ds + 2\|\sqrt{b(x)}v_t\|^2 \notag \\
& - 2(\psi'(u)-\psi'(w))w_t,v) + (\psi''(u)u_t,v^2) + 2\ep(\psi(u)-\psi(w),v)  \notag \\ 
& = 0. \label{opp-2} 
\end{align}

We now consider the functional defined by 
\begin{align} 
\mathbb{V}(t) & := \|v_t(t)\|^2 + 2\ep (v_t(t),v(t)) + \int_\Omega (1-k_0a(x))|\nabla v(t)|^2 dx + \|\xi^t\|^2_{\mathcal{M}^0} + \ep\|\sqrt{b(x)}v(t)\|^2  \notag \\ 
& + 2(\psi(u(t))-\psi(w(t)),v(t)) - (\psi'(u(t))v(t),v(t))  \label{opp-3}
\end{align}
We now will show that, given $U(t)=(u(t),u_{t}(t),\eta^t), W(t)=(w(t),w_{t}(t),\zeta^t)\in \mathcal{H}^0$ are uniformly bounded with respect to $t\ge0$ by some $R>0$, there are constants $C_1,C_2>0$, independent of $t$, in which for all $V(t)=(v(t),v_t(t),\xi^t)\in \mathcal{H}^0$,
\begin{equation}
C_1\|V(t)\|_{\mathcal{H}^0}^{2} \le \mathbb{V}(t) \le C_2\|V(t)\|_{\mathcal{H}^0}^{2}.  \label{opp-4}
\end{equation}
To this end we begin by estimating the following product with \eqref{Poincare},
\begin{align}
2\ep |(v_t,v)| & \le \ep\|v_t\|^2 + \ep\|v\|^2  \notag \\ 
& \le \ep\|v_t\|^2 + \frac{\ep}{\lambda_1}\|\nabla v\|^2,  \label{opp-4.5}
\end{align}
and
\begin{align}
\ep \|\sqrt{b(x)}v\|^2 & \le \ep\|\sqrt{b}\|^2_\infty\|v\|^2  \notag \\ 
& \le \frac{\ep}{\lambda_1}\|b\|_\infty\|\nabla v\|^2.  \label{opp-4.6}
\end{align}
Concerning the terms in the functional $\mathbb{V}$ that involve the nonlinear term $\psi$, using \eqref{psi}, \eqref{assm-f-1}, \eqref{assm-f-2} and the embedding $H^{1}(\Omega)\hookrightarrow L^{6}(\Omega)$, and also \eqref{unif-bnd}, there holds
\begin{align} 
|(\psi ^{\prime }(u)v,v) | & \leq C\left( 1+\|\nabla u\|^{2}\right) \|\nabla v\|\|v\|  \notag \\ 
& \le \ep\|\nabla v\|^{2} + C_\ep(R)\|v\|^{2},  \label{opp-5} 
\end{align}
where the constant $0<C_\ep\sim\ep^{-1}.$
From assumption \eqref{assm-f-3} and \eqref{psi}
\begin{equation}
2(\psi(u)-\psi(w),v) \geq 2(\beta -\vartheta )\|v\|^{2}.  \label{opp-6}
\end{equation}%
Hence, for $\beta=\beta(\ep)$ sufficiently large, the combination of (\ref{opp-5}) and (\ref{opp-6}) produces, 
\begin{align} 
2(\psi(u)-\psi(w),v) - (\psi'(u)v,v) & \ge 2(\beta-\vartheta)\|v\|^2 - \ep\|\nabla v\|^2 - C_\ep(R)\|v\|^2  \notag \\
& \geq - \ep\|\nabla v\|^2.  \label{opp-7}
\end{align}
With \eqref{opp-4.5}, \eqref{opp-4.6} and \eqref{opp-7} we attain the lower bound for the functional $\mathbb{V}$,
\begin{equation*}
\mathbb{V} \ge \left( \ell_0 - \frac{\ep}{\lambda_1}(2+\|b\|_\infty) - \ep\right) \|\nabla v\|^2 + \left( 1-\ep \right)\|v_t\|^2 + \|\xi^t\|^2_{\mathcal{M}^0}.  \notag
\end{equation*}
So for a sufficiently small $\ep>0$ fixed (which also fixes the choice of $\beta$), there is $m_0>0$ in which, for all $t\geq 0$, we have that 
\begin{equation}
\mathbb{V}(t) \ge m_0\|(v(t),v_{t}(t),\xi^t)\|_{\mathcal{H}^0}^{2}.  \label{opp-8}
\end{equation}
Now by the (local) Lipschitz continuity of $f$, the embedding $H^1_0(\Omega)\hookrightarrow L^2(\Omega)$, the uniform bounds on $u$ and $w$, and the Poincar\'{e} inequality \eqref{Poincare}, it is easy to check that with \eqref{cons-f-1} there holds
\begin{align}
2(\psi(u)-\psi(w),v) & \le 2\|\psi(u)-\psi(w)\| \|v\|  \notag \\
& \le C(R)\|\nabla v\|^{2}.  \label{opp-8.5}
\end{align}
Also, using \eqref{psi}, (\ref{assm-f-1}), (\ref{assm-f-2}) and the bound (\ref{unif-bnd}), there also holds
\begin{equation}
|(\psi'(u)v,v)| \leq C(R)\|\nabla v\|^{2}.  \label{opp-9}
\end{equation}
Thus, with \eqref{opp-8.5}, \eqref{opp-9} and referring to some of the above estimates, the right-hand side of (\ref{opp-4}) also follows. 

Moving forward, we now work on \eqref{opp-2}.
In light of the estimates 
\begin{align}
2|((\psi'(u)-\psi'(w))w_{t},v)| & \le C( 1 + \|\nabla u\| + \|\nabla w\|) \|w_{t}\| \|v\|^2  \notag \\
& \le \frac{1}{2\beta}\|v\|^{2} + C(R)\|w_{t}\|^{2}\mathbb{V},  \label{opp-11}
\end{align}
and
\begin{align}
|(\psi''(u)u_{t},v^2)| & \le C ( 1 + \|\nabla u\| ) \|u_{t}\| \|v\|^{2}  \notag \\
& \le \frac{1}{2\beta}\|v\|^{2} + C(R)\|u_{t}\|^{2} \mathbb{V},  \label{opp-12}
\end{align}
(here the constants $C(R)>0$ also depend on $\beta>0$) we see that with \eqref{opp-11}, \eqref{opp-12}, as well as \eqref{ell-ineq}, \eqref{kernel} and \eqref{opp-6}, the differential identity (\ref{opp-2}) becomes 
\begin{align} 
\frac{d}{dt} & \mathbb{V} + \ep\|v_t\|^2 + 2\ep\ell_0 \|\nabla v\|^2 + \delta \|\xi^t\|^2_{\mathcal{M}^0}  \notag \\ 
& + 2\ep \int_0^\infty g(s) \int_\Omega a(x) \nabla \xi^t(s)\cdot\nabla v dx ds + 2\|\sqrt{b(x)}v_t\|^2 + \left( 2\ep(\beta-\vartheta) - \frac{1}{\beta} \right)\|v\|^2  \notag \\ 
& \le C(R) \left( \|u_t\|^2 + \|w_t\|^2 \right) \mathbb{V} + 3\ep\mathbb{V},  \label{opp-14} 
\end{align}
where we also added $3\ep\|v_t\|^2$ to both sides (observe, $3\ep\|v_t\|^2\le 3\ep\mathbb{V}$).
We now seek a suitable control on the product
\begin{align}
\left| 2\ep \int_0^\infty g(s) \int_\Omega a(x) \nabla \xi^t(s)\cdot\nabla v dx ds \right| & \le 2\ep \int_0^\infty g(s) \left| \int_\Omega a(x) \nabla \xi^t(s)\cdot\nabla v dx \right| ds   \notag \\ 
& = 2\ep \int_0^\infty g(s) \left| (\xi^t(s),v)_{\mathcal{V}^1_a} \right| ds   \notag \\ 
& \le 2\ep \|\xi^t\|_{\mathcal{M}^0} \|\nabla v\|  \notag \\ 
& \le 2\sqrt{\ep}\|\xi^t\|^2_{\mathcal{M}^0} + \frac{\ep\sqrt{\ep}}{2}\|\nabla v\|^2.  \label{opp-14.3}
\end{align}
For sufficiently large $\beta>0$, we may omit the positive terms $2\|\sqrt{b(x)}v_t\|^2+(2\ep(\beta-\vartheta)-\frac{1}{\beta})\|v\|^2$ from the left-hand side of  \eqref{opp-14} so that it becomes, with \eqref{opp-14.3}, 
\begin{align} 
\frac{d}{dt} & \mathbb{V} + \ep\|v_t\|^2 + \ep\left( 2\ell_0-\frac{\sqrt{\ep}}{2} \right) \|\nabla v\|^2 + \left( \delta - 2\sqrt{\ep} \right) \|\xi^t\|^2_{\mathcal{M}^0}  \notag \\ 
& \le C(R) \left( \|u_t\|^2 + \|w_t\|^2 + 3\ep \right) \mathbb{V}.  \label{opp-14.5} 
\end{align}
For any $\ep>0$ sufficiently small so that 
\[
2\ell_0-\frac{\sqrt{\ep}}{2} >0 \quad \text{and} \quad \delta - 2\sqrt{\ep}>0,
\]
we can find a constant $m_1>0$, thanks to \eqref{opp-4}, such that \eqref{opp-14.5} can be written as the following differential inequality, to hold for almost all $t\ge0,$
\begin{align} 
\frac{d}{dt} & \mathbb{V} + \ep m_1\mathbb{V} \le C(R) \left( \|u_t\|^2 + \|w_t\|^2 + 3\ep \right) \mathbb{V}.  \label{opp-15} 
\end{align}
Here we recall Proposition \ref{GL} and Lemma \ref{t:ride}.
Applying these to (\ref{opp-15}) yields, for all $t\ge0,$
\begin{equation}
\mathbb{V}(t) \le \mathbb{V}(0) e^{Q(R)}e^{-m_1 t/2},  \label{opp-16}
\end{equation}
for some positive nondecreasing function $Q(\cdot).$
By virtue of (\ref{opp-4}) and the initial conditions provided in \eqref{problem-v}, 
\begin{align}
\mathbb{V}(0) & \le C_{2}(R) \|(v(0),v_t(0),\xi^0)\|_{\mathcal{H}^0}^{2}  \notag \\ 
& \le C_{2}(R) \left( \|\nabla u_0\|^2 + \|u_1\|^2 + \|\eta_0\|^2_{\mathcal{M}^0} \right)  \notag \\ 
& \le Q(R).  \notag
\end{align}
Therefore (\ref{opp-16}) shows that the operators $Z(t)$ are uniformly decaying to zero.
The proof is finished.
{\bf $<<<<<$}
\end{proof}

The remaining lemmas will show that the operators $K(t)$ are asymptotically compact on $\mathcal{H}_0$.
In order to establish this, we prove that the operators $K(t)$ are uniformly bounded in $\mathcal{K}^1\hookrightarrow\mathcal{H}^0.$

Due to the nature of the proof of the following lemma, we also need to assign the past history for the term $w_t$.
Indeed, from below we need to consider the initial condition
\[
\zeta^0_t(x,s) = -\zeta^0_s(x,s) = -w_t(x,0-s).
\]
However, since $u=v+w$, we can write
\[
-u_t(x,0-s) = -v_t(x,0-s) - w_t(x,0-s)
\]
and hence assume that 
\begin{align}
v_t(x,0-s) = u_t(x,0-s) = -\eta^0_t(x,s) \quad \text{and} \quad w_t(x,0-s) = 0.  \label{initial}
\end{align}

\begin{lemma}  \label{t:diff-bnd}
For each $R>0$ and for all $U_{0}=(u_{0},u_{1},\eta_0)\in \mathcal{H}^0$ such that $\|U_{0}\|_{\mathcal{H}^0} \le R$, there holds for all $t\ge0$
\begin{align}
\|\partial_tK(t)U_{0}\|^2_{\mathcal{H}^0} = \|\nabla w_t(t)\|^2 + \|w_{tt}(t)\|^2 + \|\zeta^t_t\|^2_{\mathcal{M}^0} \le Q(R)  \label{lpp-12}
\end{align}
for some positive nondecreasing function $Q(\cdot)$. 
\end{lemma}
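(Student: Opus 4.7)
The plan is to treat $(w_t,w_{tt},\zeta^t_t)$ as the weak solution of a time-differentiated version of problem W and derive an energy estimate for a suitable higher-order functional, then combine it with the integral bounds from Lemma \ref{t:ride} via Proposition \ref{GL}. First I would formally differentiate \eqref{problem-w} in time, setting $\bar w:=w_t$ and $\bar\zeta:=\zeta^t_t$, to obtain
\begin{equation*}
\bar w_{tt}-\div[(1-k_0a)\nabla\bar w]-\int_0^\infty g(s)\div[a\nabla\bar\zeta(s)]\,ds+b\,\bar w_t+\psi'(w)\bar w=\beta u_t,
\end{equation*}
coupled with $\bar\zeta_t=-\bar\zeta_s+\bar w_t$. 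Evaluating \eqref{problem-w}$_1$ at $t=0$, the trivial initial data $w(0)=w_t(0)=0$, $\zeta^0\equiv0$ yield $w_{tt}(0)=h-\psi(0)+\beta u_0\in L^2(\Omega)$ with $\|w_{tt}(0)\|\le Q(R)$, together with $\bar\zeta^0\equiv0$; a rigorous derivation proceeds through difference quotients in time (as \eqref{initial} already indicates).

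I would then introduce the higher-order energy
\begin{equation*}
\mathbb{W}(t):=\|w_{tt}\|^2+\int_\Omega(1-k_0a)|\nabla w_t|^2\,dx+\|\zeta^t_t\|^2_{\mathcal{M}^0}+(\psi'(w)w_t,w_t),
\end{equation*}
whose last summand is nonnegative by (H4r). Using the uniform bound $\|w(t)\|_{H^1_0}\le Q(R)$ from Lemma \ref{t:ride}, H\"older, and the embedding $H^1\hookrightarrow L^6$, one verifies that $\mathbb{W}(t)$ is equivalent, up to multiplicative constants depending on $R$, to $\|(w_t,w_{tt},\zeta^t_t)\|^2_{\mathcal{H}^0}$, so a uniform bound on $\mathbb{W}$ is what we need.

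The main computation is to test the differentiated equation by the multiplier $2w_{tt}+\ep w_t$ for $\ep>0$ to be fixed small. Using the algebraic identity $2(\psi'(w)w_t,w_{tt})=\frac{d}{dt}(\psi'(w)w_t,w_t)-(\psi''(w)w_t,w_t^2)$, the fact that $\zeta^t_t|_{s=0}=0$ (inherited by differentiating the constraint $\zeta^t|_{s=0}=0$) which legitimises the dissipation estimate $-\int_0^\infty g(s)\frac{\partial}{\partial s}\|\zeta^t_t(s)\|^2_{\mathcal{V}^1_a}\,ds\ge\delta\|\zeta^t_t\|^2_{\mathcal{M}^0}$, and absorbing the cross terms $2(w_{tt},w_t)$ and $\|\sqrt{b}\,w_t\|^2$ into an equivalent modification $\widetilde{\mathbb{W}}=\mathbb{W}+\ep[2(w_{tt},w_t)+\|\sqrt{b}\,w_t\|^2]$, one arrives at a differential inequality of the shape
\begin{equation*}
\frac{d}{dt}\widetilde{\mathbb{W}}+\kappa\widetilde{\mathbb{W}}\le C(R)\bigl(\|u_t\|^2+\|w_t\|^2\bigr)\widetilde{\mathbb{W}}+C(R)\bigl(1+\|u_t\|^2\bigr),
\end{equation*}
with $\kappa>0$ determined by $\delta,\ell_0,\ep$. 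The critical nonlinear remainder $(\psi''(w)w_t,w_t^2)$ is handled via $|f''|\le C(1+|w|)$, H\"older with exponents $(6,3,2)$ and $\|w_t\|^2_{L^6}\le C(\|\nabla w_t\|^2+\|w_t\|^2)$, yielding $|(\psi''(w)w_t,w_t^2)|\le C(R)\|w_t\|\widetilde{\mathbb{W}}+C(R)$. Finally I would invoke \eqref{unif-bnd-3} and \eqref{wed-0} to ensure $\int_s^t C(R)(\|u_t\|^2+\|w_t\|^2)\,d\tau\le\frac{\kappa}{2}(t-s)+Q(R)$, upon which Proposition \ref{GL} applied to $\widetilde{\mathbb{W}}$ gives $\widetilde{\mathbb{W}}(t)\le Q(R)$ for all $t\ge 0$, and thus \eqref{lpp-12}.

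The main obstacle is that direct dissipation of $\|w_{tt}\|^2$ is unavailable, since the friction $b$ is allowed to vanish away from a neighbourhood of $\{a=0\}$ (only $\|\sqrt{b}\,w_{tt}\|^2$ and the memory dissipation $\|\zeta^t_t\|^2_{\mathcal{M}^0}$ come for free). The auxiliary multiplier $\ep w_t$ is introduced precisely to manufacture the dissipation $\ep\ell_0\|\nabla w_t\|^2$, but it simultaneously spawns a troublesome $-\ep\|w_{tt}\|^2$ that must be absorbed through the equivalence-preserving shift to $\widetilde{\mathbb{W}}$ and the cross-term $2\beta(u_t,w_{tt})$ handled through Young's inequality. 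Choosing $\ep$ small enough that all of these balances simultaneously produce a net $\kappa\widetilde{\mathbb{W}}$ on the left, strong enough to dominate the on-average-small coefficient $C(R)(\|u_t\|^2+\|w_t\|^2)$, is the delicate part. Assumption (H4r) is essential throughout: without $\psi'(s)\ge 0$, the term $(\psi'(w)w_t,w_t)$ in $\mathbb{W}$ could change sign and would no longer be coercive.
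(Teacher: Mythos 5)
Your proposal follows essentially the same route as the paper: differentiate problem W in time to obtain the system for $(w_t,w_{tt},\zeta^t_t)$, form the perturbed energy containing $(\psi'(w)w_t,w_t)$ (nonnegative by (H4r)) together with the $\ep$-cross term, test with $w_{tt}+\ep w_t$, estimate the critical remainder $(\psi''(w)w_t,w_t^2)$ by H\"older and $H^1\hookrightarrow L^6$, and close with a Gr\"onwall lemma fed by the dissipation integrals of Lemma \ref{t:ride}. The only cosmetic deviation is that your inhomogeneous term $C(R)(1+\|u_t\|^2)$ is nonconstant, so you should invoke Proposition \ref{GL2} (as the paper does, using \eqref{star}) rather than Proposition \ref{GL}, or first bound $\|u_t\|\le Q(R)$ via \eqref{unif-bnd} to make the forcing constant.
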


\begin{proof}
For all $x\in\Omega$ and $t,s\in\mathbb{R}^+$, set $H(x,t):=w_t(x,t)$ and $X^t:=\zeta^t_t(s).$
Differentiating problem W with respect to $t$ yields the system
\begin{equation}  \label{problem-H}
\left\{ \begin{array}{ll} H_{tt} - \div[(1-k_0a(x))\nabla H] - \displaystyle\int_0^\infty g(s)\div[a(x)\nabla X^t(s)]ds + b(x)H_t + \psi'(w)H = \beta u_t & \text{in}\ \Omega\times\mathbb{R}^+ \\  
X^t_t = -X^t_s + H_t & \text{in}\ \Omega\times\mathbb{R}^+ \\ 
H(x,t) = w_t(x,t) = 0, \quad X^t(x,s) = \zeta^t_t(x,s) & \text{on}\ \Gamma\times\mathbb{R}^+ \\ 
H(x,0) = w_t(x,0) = 0, \quad H_t(x,0) = w_{tt}(x,0) = -f(0)-u_1 \quad \text{(from \eqref{problem-w})} & \text{at}\ \Omega\times\{0\} \\ 
X^t(x,0) = w_t(x,t) - w_t(x,t-0) = 0, \quad X^0(x,s) = 0 \quad \text{(see \eqref{initial})} & \text{at}\ \Omega\times\{0\}.
\end{array}\right.
\end{equation}
Multiply equation \eqref{problem-H}$_1$ by $H_t+\ep H$ for some $\ep>0$ to be chosen below.
To this result we apply the identities 
\[
(\psi'(w)H,H_t) = \frac{1}{2}\frac{d}{dt}(\psi'(w)H,H) - \frac{1}{2}(\psi''(w)w_t,H^2),
\]
and (here we rely on \eqref{problem-H}$_2$)
\begin{align}
\int_0^\infty g(s) & \int_\Omega a(x)\nabla X^t(s)\nabla H_t(t) dxds  \notag \\ 
& = \frac{1}{2}\frac{d}{dt}\|X^t\|^2_{\mathcal{M}^0} + \int_0^\infty g(s)\frac{d}{ds}\|X^t(s)\|^2_{\mathcal{V}^1_a}ds  \notag \\
& = \frac{1}{2}\frac{d}{dt}\|X^t\|^2_{\mathcal{M}^0} - \int_0^\infty g'(s)\|X^t(s)\|^2_{\mathcal{V}^1_a}ds  \notag
\end{align}
so that together we find 
\begin{align}
& \frac{d}{dt} \left\{ \|H_t\|^2 + 2\ep(H_t,H) + \int_\Omega (1-k_0a(x))|\nabla H|^2dx + \|X^t\|^2_{\mathcal{M}^0} + (\psi'(w)H,H) \right\}  \notag \\ 
& - 2\ep\|H_t\|^2 + 2\ep\|\sqrt{b(x)}H_t\|^2 + 2\ep(b(x)H_t,H) + 2\ep\int_\Omega (1-k_0a(x))|\nabla H|^2dx + 2\ep(\psi'(w)H,H)  \notag \\ 
& - 2\int_0^\infty g'(s)\|X^t(s)\|^2_{\mathcal{V}^1_a}ds + 2\ep\int_0^\infty g(s)\int_\Omega a(x)\nabla X^t(s)\cdot \nabla H(t) dx ds  \notag \\
& = (\psi''(w)w_t,H^2) + 2\beta(u_t,H_t) + 2\beta\ep(u_t,H).  \label{lpp-1}
\end{align}
Next we recall \eqref{kernel} and find
\begin{align}
-2\int_0^\infty g'(s)\|X^t(s)\|^2_{\mathcal{V}^1_a} ds & \ge 2\delta\|X^t\|^2_{\mathcal{M}^0},  \label{lpp-2}
\end{align}
and 
\begin{align}
2\ep\int_0^\infty g(s)\int_\Omega a(x)\nabla X^t(s)\cdot \nabla H(t) dx ds & \ge -\delta\|X^t\|^2_{\mathcal{M}^0} - \frac{\ep^2}{\delta}\|\nabla H\|^2,  \label{lpp-3}
\end{align}
where the last inequality follows from \eqref{small-g}.
For all $\ep>0$ and $t\ge0$, define the functional 
\begin{align}
\mathbb{I}(t):=\|H_t(t)\|^2 + 2\ep(H_t(t),H(t)) + \int_\Omega (1-k_0a(x))|\nabla H(t)|^2dx + \|X^t\|^2_{\mathcal{M}^0} + (\psi'(w)H(t),H(t)).  \label{lpp-4}
\end{align}
Thanks to \eqref{ell-ineq} and since $\psi'>0$, there is a constant $C>0$, sufficiently small, so that
\begin{align}
C\left( \|H_t(t)\|^2 + \ell_0\|\nabla H(t)\|^2 + \|X^t\|^2_{\mathcal{M}^0} \right) \le \mathbb{I}(t).  \label{lpp-4.5}
\end{align}
At this point we can write \eqref{lpp-1}-\eqref{lpp-3} with \eqref{lpp-4} as
\begin{align}
& \frac{d}{dt}\mathbb{I} - 2\ep\|H_t\|^2 + 2\ep\|\sqrt{b(x)}H_t\|^2 + 2\ep(b(x)H_t,H) + \left( 2\ep\ell_0 - \frac{\ep^2}{\delta} \right)\|\nabla H\|^2  \notag \\ 
& + \delta\|X^t\|^2_{\mathcal{M}^0} + 2\ep(\psi'(w)H,H)  \notag \\
& \le 2(\psi''(w)w_t,H^2) + 2\beta(u_t,H_t) + 2\beta\ep(u_t,H).  \label{lpp-5}
\end{align}
Next, let us rely on the uniform bounds \eqref{unif-bnd} and \eqref{opp-1} to estimate the products on the right-hand side
\begin{align}
2|(\psi''(w)w_t,H^2)| & \le 2\|\psi''(w)w_t H^2\|_{1}  \notag \\ 
& \le 2\|\psi''(w)w_t\|_{3/2}\|H\|^2_6  \notag \\
& \le 2\|\psi''(w)\|_{6}\|w_t\|\|H\|^2_6  \notag \\ 
& \le C(R)\|w_t\|\|\nabla H\|^2  \notag \\ 
& \le C(R)\|w_t\|\mathbb{I},  \label{lpp-7}
\end{align}
\begin{align}
2\beta|(u_t,H_t)+\ep(u_t,H)| & \le C(R)\|H_t\| + C(R)\|\nabla H\|  \notag \\ 
& \le C_\ep(R) + \ep\|H_t\|^2 + \ep^2\|\nabla H\|^2,  \label{lpp-8}
\end{align}
where $C_\ep\sim\ep^{-1}\wedge\ep^{-2}$.
Also, we know
\begin{align}
2\ep(\psi'(w)H,H) & \ge 2\ep^2(\beta-\vartheta)\|H\|^2 >0.  \label{lpp-8.5}
\end{align}
Thus, combining \eqref{lpp-5}-\eqref{lpp-8.5} yields 
\begin{align}
\frac{d}{dt}\mathbb{I} & - 3\ep\|H_t\|^2 + 2\ep\|\sqrt{b(x)}H_t\|^2 + \ep\left( 2\ell_0-\ep\left(\frac{1}{\delta}+1\right)\right)\|\nabla H\|^2 + \delta\|X^t\|^2_{\mathcal{M}^0}  \notag \\
& \le C(R)\|w_t\|\mathbb{I} + C_\ep(R).  \label{lpp-10}
\end{align}
Since $4\ep\|H_t\|^2\le4\ep\mathbb{I}$, adding this to \eqref{lpp-10} makes the differential inequality (we also omit $2\ep\|\sqrt{b(x)}H_t\|^2$)
\begin{align}
\frac{d}{dt}\mathbb{I} & + \ep\|H_t\|^2 + \ep\left( 2\ell_0-\ep\left(\frac{1}{\delta}+1\right)\right)\|\nabla H\|^2 + \delta\|X^t\|^2_{\mathcal{M}^0}  \notag \\
& \le C(R)\left( \|w_t\| + \ep \right)\mathbb{I} + C_\ep(R).  \label{lpp-10.5}
\end{align}
We now find that for any $\ep>0$ small so that
\[
2\ell_0-\ep\left(\frac{1}{\delta}+1\right) >0,
\]
then 
\begin{align}
& \frac{d}{dt}\mathbb{I} + \ep\mathbb{I} \le C(R)\left( \|w_t\| + \ep \right)\mathbb{I} + C_\ep(R) \label{lpp-11}
\end{align}
to which we now apply Proposition \ref{GL2} and the bounds \eqref{wed-0} and \eqref{star} to conclude that, for all $t\ge0$, there holds
\begin{align}
\mathbb{I}(t) \le C(R) \mathbb{I}(0)e^{-\ep t/2} + C_\ep(R).  \label{lpp-11.5} 
\end{align}
Moreover, with \eqref{lpp-4} and the initial conditions in \eqref{problem-H} we find that there is a constant $C>0$ (with $\ep>0$ now fixed) in which 
\begin{align}
\|H_t(t)\|^2 + \|\nabla H(t)\|^2 + \|X^t\|^2_{\mathcal{M}^0} & \le C(R).  \notag
\end{align}
This establishes \eqref{lpp-12} and completes the proof.
\end{proof}

We derive the immediate consequence of \eqref{problem-w} and \eqref{lpp-12}.

\begin{corollary}  \label{t:part-z}
Under the assumptions of Lemma \ref{t:diff-bnd}, there holds for all $t\ge0,$
\begin{align}
\|\zeta^t_s\|_{\mathcal{M}^0} \le Q(R).  \label{part-z}
\end{align}
\end{corollary}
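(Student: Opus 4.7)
The plan is to invoke the transport equation \eqref{problem-w}$_2$ directly. Rearranging, we have
\[
\zeta^t_s = w_t - \zeta^t_t = w_t - X^t,
\]
where $X^t = \zeta^t_t$ is the auxiliary variable introduced in the proof of Lemma \ref{t:diff-bnd}. Note that $w_t = w_t(x,t)$ is independent of the history variable $s$, so it lies in $\mathcal{V}^1_a$ (viewed as a constant-in-$s$ function) whenever $w_t \in H^1_0(\Omega)$, by the continuous embedding $H^1_0(\Omega) \hookrightarrow \mathcal{V}^1_a$ guaranteed by (H1).

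Taking the $\mathcal{M}^0$-norm of this identity and using the triangle inequality, I would estimate
\[
\|\zeta^t_s\|^2_{\mathcal{M}^0} \le 2\|X^t\|^2_{\mathcal{M}^0} + 2\int_0^\infty g(s)\,\|w_t(t)\|^2_{\mathcal{V}^1_a}\,ds = 2\|X^t\|^2_{\mathcal{M}^0} + 2k_0\|w_t(t)\|^2_{\mathcal{V}^1_a},
\]
where $k_0 = \int_0^\infty g(s)\,ds < \infty$ by (H3). Since
\[
\|w_t(t)\|^2_{\mathcal{V}^1_a} = \int_\Omega a(x)|\nabla w_t(t)|^2\,dx \le \|a\|_\infty \|\nabla w_t(t)\|^2,
\]
both contributions are controlled by the quantities in \eqref{lpp-12}: the first term by $\|\zeta^t_t\|^2_{\mathcal{M}^0} \le Q(R)$ and the second by $\|\nabla w_t(t)\|^2 \le Q(R)$. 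Combining these yields $\|\zeta^t_s\|_{\mathcal{M}^0} \le Q(R)$, as required.

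The argument is essentially algebraic once Lemma \ref{t:diff-bnd} is in hand; the only subtlety is ensuring that the pointwise-in-$s$ constant function $s \mapsto w_t(\cdot,t)$ has finite $\mathcal{V}^1_a$-norm, which is where the embedding $H^1_0(\Omega) \hookrightarrow \mathcal{V}^1_a$ (equivalently, the bound $\|w_t\|_{\mathcal{V}^1_a} \lesssim \|a\|_\infty^{1/2}\|\nabla w_t\|$) enters. No further Gronwall argument or energy estimate is needed, since all the hard work has already been done in Lemma \ref{t:diff-bnd}.
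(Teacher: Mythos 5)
Your argument is correct and matches the paper's intent exactly: the paper presents this corollary as an ``immediate consequence'' of \eqref{problem-w} and \eqref{lpp-12}, i.e.\ precisely the rearrangement $\zeta^t_s = w_t - \zeta^t_t$ followed by the triangle inequality in $\mathcal{M}^0$, with $\|w_t\|^2_{\mathcal{M}^0}=k_0\|w_t\|^2_{\mathcal{V}^1_a}\le k_0\|a\|_\infty\|\nabla w_t\|^2$. You have simply supplied the details the paper leaves implicit.
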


Before we continue, we derive a further estimate for $\zeta^t$.

\begin{lemma}
Under the assumptions of Lemma \ref{t:diff-bnd}, there holds for all $t\ge0$,
\begin{align}
\|\nabla\zeta^t\|_{L^2_g(\mathbb{R}^+;L^2(\Omega))} \le C_\delta.  \label{ccc-2}
\end{align}
\end{lemma}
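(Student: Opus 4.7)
The plan is to exploit the explicit representation of $\zeta^t$ furnished by Proposition~\ref{t:gen-T}. Since $\zeta^t$ solves the transport equation \eqref{problem-w}$_2$, which has the same structure as \eqref{pde-2} with source $w_t$ in place of $u_t$, and since the initial data satisfy $\zeta^0\equiv 0$ together with $w(0)=0$ (both from \eqref{problem-w}), the same argument as in Proposition~\ref{t:gen-T} yields
\[
\zeta^t(s)=\begin{cases} w(t)-w(t-s), & 0\le s\le t,\\ w(t), & s>t.\end{cases}
\]

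From there I would split $\int_0^\infty g(s)\|\nabla\zeta^t(s)\|^2\,ds$ at $s=t$. The tail $\int_t^\infty g(s)\|\nabla w(t)\|^2\,ds\le k_0\|\nabla w(t)\|^2$ is controlled by the uniform bound \eqref{wer-1} from Lemma~\ref{t:ride}, which gives $\|\nabla w(t)\|^2\le Q(R)$. For the main term I would write $w(t)-w(t-s)=\int_{t-s}^t w_\tau(\tau)\,d\tau$, take gradients inside the integral, and apply Cauchy--Schwarz pointwise in $x$ (followed by Fubini) to obtain
\[
\|\nabla w(t)-\nabla w(t-s)\|^2\le s\int_{t-s}^t\|\nabla w_\tau(\tau)\|^2\,d\tau\le s^2 Q(R),
\]
where the final inequality uses the uniform estimate $\|\nabla w_t(\tau)\|^2\le Q(R)$ supplied by Lemma~\ref{t:diff-bnd}.

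Combining these pieces yields
\[
\int_0^\infty g(s)\|\nabla\zeta^t(s)\|^2\,ds\le Q(R)\int_0^\infty s^2 g(s)\,ds+k_0 Q(R),
\]
and the exponential decay $g(s)\le g(0)e^{-\delta s}$ (an immediate consequence of \eqref{kernel}) ensures $\int_0^\infty s^2 g(s)\,ds\le 2g(0)/\delta^3$. The resulting constant depends on $R$ and on $\delta^{-3}$, which is consistent with the notation $C_\delta$ in the statement.

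I do not foresee a serious obstacle here: the two key ingredients---the uniform boundedness of $\|\nabla w\|$ from Lemma~\ref{t:ride} and of $\|\nabla w_t\|$ from Lemma~\ref{t:diff-bnd}---are already in hand, and the required moments of $g$ are finite thanks to the exponential decay of the kernel. The only mild subtlety is verifying that Proposition~\ref{t:gen-T} applies verbatim to $\zeta^t$, which follows immediately from the identical transport structure of \eqref{problem-w}$_2$ and the vanishing initial data for $\zeta^0$ and $w$.
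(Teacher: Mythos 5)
Your proof is correct, but it takes a genuinely different route from the paper. The paper argues by an energy estimate on the history variable: it multiplies the transport equation \eqref{problem-w}$_2$ by $-\Delta\zeta^t(s)$ in $L^2_g(\mathbb{R}^+;L^2(\Omega))$, integrates by parts in $s$ and uses $g'\le-\delta g$ to produce the differential inequality $\frac{d}{dt}\|\nabla\zeta^t\|^2_{L^2_g}+\frac{\delta}{2}\|\nabla\zeta^t\|^2_{L^2_g}\le\frac{2}{\delta}\|\nabla w_t\|^2\le C_\delta$, and then closes with a Gr\"onwall inequality and the null initial datum. You instead invoke the explicit representation of Proposition \ref{t:gen-T} (legitimately transferred to $\zeta^t$, since \eqref{problem-w}$_2$ has the identical transport structure and $\zeta^0=0$, $w(0)=0$), split the $s$-integral at $s=t$, and estimate the difference $\nabla w(t)-\nabla w(t-s)$ via the fundamental theorem of calculus and the uniform bound on $\|\nabla w_t\|$ from Lemma \ref{t:diff-bnd}. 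Both arguments are sound and both constants depend on $R$ and $\delta$. What the paper's route buys is independence from the representation formula and from any moment condition on $g$; what your route buys is directness, at the price of requiring $\int_0^\infty s^2g(s)\,ds<\infty$ (which does follow from \eqref{kernel}, provided one reads (H3) as giving $g(0)<\infty$, as the paper implicitly does). In fact your approach can be simplified further: on $0\le s\le t$ one has $\|\nabla\zeta^t(s)\|\le\|\nabla w(t)\|+\|\nabla w(t-s)\|\le 2Q(R)^{1/2}$ directly from \eqref{wer-1}, so the crude bound $\|\nabla\zeta^t\|^2_{L^2_g(\mathbb{R}^+;L^2(\Omega))}\le 4k_0Q(R)$ already follows without Lemma \ref{t:diff-bnd} or any moment of $g$.
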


\begin{proof}
Formally multiplying \eqref{problem-w}$_2$ in $L^2_g(\mathbb{R}^+;L^2(\Omega))$ by $-\Delta \zeta^t(s)$ and estimating the result yields the differential inequality
\begin{align}
\frac{d}{dt}\|\nabla\zeta^t\|^2_{L^2_g(\mathbb{R}^+;L^2(\Omega))} & = -\int_0^\infty g(s) \frac{d}{ds} \|\nabla\zeta^t(s)\|^2 ds + (\nabla w_t,\nabla\zeta^t)_{L^2_g(\mathbb{R}^+;L^2(\Omega))}  \notag \\ 
& = \int_0^\infty g'(s) \|\nabla\zeta^t(s)\|^2 ds + (\nabla w_t,\nabla\zeta^t)_{L^2_g(\mathbb{R}^+;L^2(\Omega))}  \notag \\  
& \le -\delta\int_0^\infty g(s) \|\nabla\zeta^t(s)\|^2 ds + \frac{2}{\delta}\|\nabla w_t\|^2 + \frac{\delta}{2}\|\nabla\zeta^t\|^2_{L^2_g(\mathbb{R}^+;L^2(\Omega))}  \notag \\  
& = -\frac{\delta}{2}\|\nabla\zeta^t\|^2_{L^2_g(\mathbb{R}^+;L^2(\Omega))} + \frac{2}{\delta}\|\nabla w_t\|^2.  \label{ccc-1}
\end{align}
Hence, applying the bound \eqref{lpp-12} to \eqref{ccc-1}, we find the differential inequality which holds for almost all $t\ge0$
\begin{align}
\frac{d}{dt}\|\nabla\zeta^t\|^2_{L^2_g(\mathbb{R}^+;L^2(\Omega))} + \frac{\delta}{2}\|\nabla\zeta^t\|^2_{L^2_g(\mathbb{R}^+;L^2(\Omega))} \le C_\delta  \notag
\end{align}
where $0<C_\delta\sim\delta^{-1}.$
Applying a straight-forward Gr\"{o}nwall inequality and the initial conditions in \eqref{problem-w} produces the desired bound \eqref{ccc-2}.
This concludes the proof. 
\end{proof}

\begin{lemma}  \label{t:uniform-compactness}
Under the assumptions of Lemma \ref{t:diff-bnd}, the following holds for all $t>0$,
\begin{equation}
\|K(t)U_{0}\|_{\mathcal{K}^1} \le Q(R),  \label{compact}
\end{equation}
for some positive nondecreasing function $Q(\cdot)$. 
Furthermore, the operators $K(t)$ are uniformly compact in $\mathcal{H}^0$.
\end{lemma}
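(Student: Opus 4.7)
The plan is to bound $K(t)U_0=(w(t),w_t(t),\zeta^t)$ separately in each factor of $\mathcal{K}^1=H^2(\Omega)\times H^1(\Omega)\times\mathcal{T}^1$, after which uniform compactness follows immediately from the compact embedding $\mathcal{K}^1\hookrightarrow\mathcal{H}^0$ established in Section 2. Lemma \ref{t:diff-bnd} (combined with \eqref{Poincare}) already delivers $\|w_t(t)\|_{H^1}\le Q(R)$, so the substance lies in producing a uniform $H^2$ bound on $w(t)$ and a uniform $\mathcal{T}^1$ bound on $\zeta^t$.

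For the $H^2$ bound I would exploit the explicit representation of $\zeta^t$ in Proposition \ref{t:gen-T}. Since $\zeta^0=0$ and $w(0)=0$ in problem W, one has $\zeta^t(s)=w(t)-w(t-s)$ for $0\le s\le t$ and $\zeta^t(s)=w(t)$ for $s>t$. Substituting this into the memory convolution produces the identity
\[
\int_0^\infty g(s)\,\div[a(x)\nabla\zeta^t(s)]\,ds = k_0\,\div[a(x)\nabla w(t)] - \int_0^t g(s)\,\div[a(x)\nabla w(t-s)]\,ds,
\]
so that the stiff principal part of \eqref{problem-w}$_1$ collapses neatly: $\div[(1-k_0a)\nabla w]+k_0\div[a\nabla w] = \Delta w$, leaving
\[
-\Delta w(t) = G(t) - \int_0^t g(s)\,\div[a(x)\nabla w(t-s)]\,ds,
\]
where $G(t):=h+\beta u - w_{tt} - bw_t - \psi(w)$ is uniformly bounded in $L^2(\Omega)$ by Corollary \ref{t:unif-bnd}, Lemma \ref{t:diff-bnd}, and the critical growth of $f$ via $H^1\hookrightarrow L^6$. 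Expanding $\div[a\nabla w(t-s)]=\nabla a\cdot\nabla w(t-s)+a\Delta w(t-s)$ (using $a\in C^1(\overline{\Omega})$), the gradient piece is controlled in $L^2$ by $\|\nabla a\|_\infty k_0\sup_\tau\|\nabla w(\tau)\|\le Q(R)$ thanks to Corollary \ref{t:unif-bnd}, while the Laplacian piece is dominated by $\|a\|_\infty k_0\sup_{\tau\le t}\|\Delta w(\tau)\|$. Taking the $L^2$ norm and then $\sup_{t'\le t}$ on both sides of the displayed equation gives
\[
\sup_{t'\le t}\|\Delta w(t')\| \le Q(R) + k_0\|a\|_\infty\sup_{t'\le t}\|\Delta w(t')\|,
\]
and the smallness hypothesis \eqref{small-g}, i.e.\ $1-k_0\|a\|_\infty=\ell_0>0$, lets me absorb the memory term. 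This yields $\|\Delta w(t)\|\le Q(R)$ uniformly, whence $\|w(t)\|_{H^2}\le Q(R)$ by \eqref{reg-est}. This Volterra-type manipulation, turning crucially on the smallness of $k_0\|a\|_\infty$, is where I expect the main obstacle to lie.

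With the uniform $H^2$ bound on $w(\tau)$ in hand, the representation formula immediately yields $\|\Delta\zeta^t(s)\|\le 2Q(R)$ for every $s\ge0$, so $\|\zeta^t\|_{\mathcal{M}^1}\le Q(R)$ after multiplying by $a\in L^\infty$ and integrating against $g$. Corollary \ref{t:part-z} controls $\|\zeta^t_s\|_{\mathcal{M}^0}$, and the tail term $\sup_{\tau\ge 1}\tau\mathbb{T}(\tau;\zeta^t)$ in \eqref{new-norm} is handled by Lemma \ref{what-2}: since $\zeta^0=0$ the first summand on its right vanishes, while the second is controlled by the uniform estimate $\|\nabla w(t)\|\le Q(R)$ from Corollary \ref{t:unif-bnd}. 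Thus $\|\zeta^t\|_{\mathcal{T}^1}\le Q(R)$ and \eqref{compact} follows. Finally, since the embedding $\mathcal{K}^1\hookrightarrow\mathcal{H}^0$ is compact, \eqref{compact} implies $K(t)B$ is relatively compact in $\mathcal{H}^0$ for every bounded $B\subset\mathcal{H}^0$, which is the claimed uniform compactness.
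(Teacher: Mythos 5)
Your argument reaches the same conclusion but by a genuinely different route for the core estimate. The paper also rewrites problem W using $\zeta^t(s)=w(t)-w(t-s)$ (its identity \eqref{qww-0.2} and the alternate form \eqref{qww-1}), but then multiplies by $(-\Delta)w_t+(-\Delta)w$, assembles a higher-order energy functional $\Phi$ containing $\|\nabla w_t\|^2$, $\|\Delta w\|^2$ and $\|\zeta^t\|^2_{\mathcal{M}^1}$ plus lower-order corrections, proves its coercivity, and closes a linear Gr\"onwall inequality $\frac{d}{dt}\Phi+cm_2\Phi\le Q(R)$ with $\Phi(0)=0$; the $\mathcal{M}^1$ bound on $\zeta^t$ is then read off from $\Phi$ rather than from the representation formula. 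You instead freeze $t$ and treat the equation as an elliptic problem, $-\Delta w(t)=G(t)-\int_0^t g(s)\div[a(x)\nabla w(t-s)]\,ds$, which works precisely because Lemma \ref{t:diff-bnd} already bounds $w_{tt}$ in $L^2$ (the same input the paper uses, e.g.\ in its estimate of $(b(x)w_{tt},(-\Delta)w)$), and you absorb the convolution via $k_0\|a\|_\infty<1$. Your route is shorter, dispenses with the $\mathcal{V}^2_a$ integration-by-parts identities and the delicate coercivity of $\Phi$, and isolates exactly where \eqref{small-g} and the $w_{tt}$ bound enter; it also yields the $\mathcal{M}^1$ bound for free from the representation of $\zeta^t$. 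What it costs is the need to know a priori that $\sup_{\tau\le t}\|\Delta w(\tau)\|$ is finite before it can be absorbed into the left-hand side; this should be justified by running the computation on Galerkin or Yosida approximants (or by continuation from the regular zero initial datum), though the paper's own multiplications by $(-\Delta)w_t$ are formal to exactly the same degree, so this is a shared caveat rather than a defect of your approach. Two trivial slips: the uniform bound on $\|\nabla w(\tau)\|$ that you invoke both for the gradient piece of the convolution and for the hypothesis of Lemma \ref{what-2} comes from \eqref{wer-1} in Lemma \ref{t:ride}, not from Corollary \ref{t:unif-bnd}, which concerns $u$. Your concluding steps --- Corollary \ref{t:part-z} for $\zeta^t_s$, Lemma \ref{what-2} with $\zeta_0=0$ for the tail, and the compact embedding $\mathcal{K}^1\hookrightarrow\mathcal{H}^0$ for uniform compactness --- coincide with the paper's.
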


\begin{proof}
The proof consists of several  parts. 
In the first part, we derive further bounds for some higher order terms.
We begin by rewriting/expanding \eqref{problem-w} as
\begin{align}
& w_{tt} +k_0\nabla a(x)\cdot\nabla w +(1-k_0a(x))(-\Delta)w  \notag \\
& - \displaystyle\int_0^\infty g(s)\nabla a(x)\cdot\nabla\zeta^t(s)ds + \displaystyle\int_0^\infty g(s)a(x)(-\Delta)\zeta^t(s)ds + b(x)w_t + \psi(w) = \beta u.  \label{qww-0.1}
\end{align}
Next, Using the relative displacement history definition of the memory space term
\begin{equation}  \label{memory-w}
\zeta^t(s) := w(x,t) - w(x,t-s),
\end{equation}
we rewrite the integral 
\begin{align}
\displaystyle\int_0^\infty g(s)a(x)(-\Delta)\zeta^t(s)ds = k_0 a(x)(-\Delta)w - \int_0^\infty g(s)a(x)(-\Delta)w(t-s)ds.  \label{qww-0.2}
\end{align}
Combining \eqref{qww-0.1} and \eqref{qww-0.2} shows \eqref{problem-w} takes the useful alternate form
\begin{align}
& w_{tt} - \Delta w - \int_0^\infty g(s) a(x) (-\Delta) w(t-s) ds + b(x)w_t + \psi(w)  \notag \\ 
& + k_0 \nabla a(x)\cdot\nabla w - \int_0^\infty g(s) \nabla a(x)\cdot \nabla\zeta^t(s) ds = \beta u.  \label{qww-1}
\end{align}We now report six identities that will be used below:
\begin{align}
(w_{tt},(-\Delta)w) = \frac{d}{dt}(\nabla w_t,\nabla w) - \|\nabla w_t\|^2,  \label{qww-1.05}
\end{align}
\begin{align}
-\int_0^\infty & g(s)(a(x)(-\Delta)\underbrace{w(t-s)}_{=w(t)-\zeta^t(s)},(-\Delta)w_t(t)) ds  \notag \\
& = -\int_0^\infty g(s)(a(x)(-\Delta)w(t),(-\Delta)w_t(t)) ds + \int_0^\infty g(s)(a(x)(-\Delta)\zeta^t(s),(-\Delta)\underbrace{w_t(t)}_{=\zeta^t_t(s)+\zeta^t_s(s)})ds  \notag \\ 
& = -\frac{k_0}{2}\frac{d}{dt}\|w\|^2_{\mathcal{V}^2_a} + \frac{1}{2}\frac{d}{dt}\|\zeta^t\|^2_{\mathcal{M}^1} + \frac{1}{2}\int_0^\infty g(s)\frac{d}{ds}\|\zeta^t(s)\|^2_{\mathcal{V}^2_a} ds,  \label{qww-1.1}
\end{align}
\begin{align}
-\int_0^\infty & g(s)(a(x)(-\Delta)\underbrace{w(t-s)}_{=w(t)-\zeta^t(s)},(-\Delta)w(t)) ds = -k_0\|w\|^2_{\mathcal{V}^2_a} + \int_0^\infty g(s)(a(x)(-\Delta)\zeta^t(s),(-\Delta)w(t)) ds,  \label{qww-1.2}
\end{align}
\begin{align}
(b(x)w_t,(-\Delta)w_t) = \frac{d}{dt}(b(x)w_t,(-\Delta)w) - (b(x)w_{tt},(-\Delta)w),  \label{qww-1.3}
\end{align}
\begin{align}
k_0(\nabla a(x)\cdot \nabla w,(-\Delta)w_t) = \frac{d}{dt}k_0(\nabla a(x)\cdot \nabla w,(-\Delta)w) - k_0(\nabla a(x)\cdot \nabla w_{t},(-\Delta)w),  \label{qww-1.4}
\end{align}
and
\begin{align}
-\int_0^\infty & g(s)(\nabla a(x)\cdot \nabla\zeta^t(s),(-\Delta)w_t(t)) ds  \notag \\ 
& = -\frac{d}{dt} \int_0^\infty g(s)(\nabla a(x)\cdot \nabla\zeta^t(s),(-\Delta)w(t)) ds + \int_0^\infty g(s)(\nabla a(x)\cdot \nabla\zeta^t_t(s),(-\Delta)w(t)) ds.  \label{qww-1.5}
\end{align}
Next we multiply \eqref{qww-1} in $L^2(\Omega)$ by $(-\Delta)w_t+(-\Delta)w$ to obtain, in light of \eqref{qww-1.1}-\eqref{qww-1.5}, the differential identity 
\begin{align}
& \frac{d}{dt}\left\{ \|\nabla w_t\|^2 + 2(\nabla w_t,\nabla w) + \|\Delta w\|^2 - k_0\|w\|^2_{\mathcal{V}^2_a} + \|\zeta^t\|^2_{\mathcal{M}^1} \right.  \notag \\ 
& \left. + 2(b(x)w_t,(-\Delta)w) + 2k_0(\nabla a(x)\cdot\nabla w,(-\Delta)w) - 2\int_0^\infty g(s)(\nabla a(x)\cdot\nabla\zeta^t(s),(-\Delta)w(t)) ds \right\}  \notag \\ 
& - 2\|\nabla w_t\|^2 + 2\|\Delta w\|^2 + \int_0^\infty g(s) \frac{d}{ds}\|\zeta^t(s)\|^2_{\mathcal{V}^2_a} ds - 2k_0\|w\|^2_{\mathcal{V}^2_a}  \notag \\ 
& + 2\int_0^\infty g(s)(a(x)(-\Delta)\zeta^t(s),(-\Delta)w(t)) ds - 2(b(x)w_{tt},(-\Delta)w) + 2(b(x)w_t,(-\Delta)w)  \notag \\ 
& + 2(\psi'(w)\nabla w,\nabla w_t) + 2(\psi(w),(-\Delta)w) - 2k_0(\nabla a(x)\cdot\nabla w_t,(-\Delta)w) + 2k_0(\nabla a(x)\cdot\nabla w,(-\Delta)w)  \notag \\
& + 2\int_0^\infty g(s)(\nabla a(x)\cdot\nabla\zeta^t_t(s),(-\Delta)w(t)) ds - 2\int_0^\infty g(s)(\nabla a(x)\cdot\nabla\zeta^t(s),(-\Delta)w(t)) ds  \notag \\
& = 2\beta(\nabla u,\nabla w_t) + 2\beta(u,(-\Delta)w).  \label{qww-2}
\end{align}

We now seek a constant $m_2>0$ sufficiently small so that we can write the above differential identity in the following form
\begin{equation}  \label{qww-3}
\frac{d}{dt}\Phi + c m_2\Phi \le Q(R)
\end{equation}
where 
\begin{align}
& \Phi(t) := \|\nabla w_t(t)\|^2 + 2(\nabla w_t(t),\nabla w(t)) + \|\Delta w(t)\|^2 - k_0\|w(t)\|^2_{\mathcal{V}^2_a} + \|\zeta^t\|^2_{\mathcal{M}^1}  \notag \\ 
& + 2(b(x)w_t(t),(-\Delta)w(t)) + 2k_0(\nabla a(x)\cdot\nabla w(t),(-\Delta)w(t)) - 2\int_0^\infty g(s)(\nabla a(x)\cdot\nabla\zeta^t(s),(-\Delta)w(t)) ds.  \label{qww-3.2}
\end{align}
The important lower bound holds
\begin{align}
\Phi \ge C_1(\|\Delta w\|^2 + \|\nabla w_t\|^2 + \|\zeta^t\|^2_{\mathcal{M}^1}) - C_2(R)  \label{qww-3.5}
\end{align}
for some constants $C_1,C_2(R)>0,$ and essentially follows from some basic estimates, the bounds \eqref{unif-bnd}, \eqref{opp-1}, \eqref{lpp-12}, \eqref{ccc-2}, the Poincar\'{e} inequality \eqref{Poincare} and with the assumptions on the functions $a$ and $b$.
Indeed, we estimate, for all $\ep>0,$
\begin{align}
2|(\nabla w_t,\nabla w)| & \le \ep\|\nabla w_t\|^2 + \frac{1}{\ep}\|\nabla w\|^2  \notag \\
& \le \ep\|\nabla w_t\|^2 + C_\ep(R),  \label{qww-3.51}
\end{align}
\begin{align}
- k_0\|w\|^2_{\mathcal{V}^2_a} & = -k_0\int_\Omega a(x)|\Delta w|^2 dx  \notag \\ 
& \ge -k_0 \|a\|_\infty\|\Delta w\|^2,  \label{qww-3.52}
\end{align}
\begin{align}
2|(b(x)w_t,(-\Delta)w)| & \le \frac{1}{\ep}\|b(x)w_t\| + \ep\|\Delta w\|^2  \notag \\ 
& \le C_\ep(R) + \ep\|\Delta w\|^2,  \label{qww-3.53}
\end{align}
\begin{align}
2k_0|(\nabla a(x)\cdot\nabla w,(-\Delta)w)| & \le \frac{k_0^2}{\ep}\|\nabla a(x)\cdot\nabla w\|^2 + \ep\|\Delta w\|^2  \notag \\ 
& \le C_\ep(R) + \ep\|\Delta w\|^2,  \label{qww-3.54}
\end{align}
and
\begin{align}
2\int_0^\infty g(s)|(\nabla a(x)\cdot\nabla\zeta^t(s),(-\Delta)w(t))| ds & \le \int_0^\infty g(s)\left( \frac{1}{\ep}\|\nabla a(x)\cdot\nabla\zeta^t(s)\|^2 + \ep\|\Delta w(t)\|^2 \right) ds  \notag \\
& \le \frac{1}{\ep}\int_0^\infty g(s) \|\nabla a\|^2_\infty\|\nabla\zeta^t(s)\|^2 ds + \ep\int_0^\infty g(s)\|\Delta w(t)\|^2 ds  \notag \\
& \le \frac{1}{\ep}\|\nabla a\|^2_\infty\|\nabla\zeta^t\|^2_{L^2_g(\mathbb{R}^+;L^2(\Omega))} + \ep k_0\|\Delta w\|^2  \notag \\ 
& \le C_\ep(R) + \ep k_0\|\Delta w\|^2.  \label{qww-3.55}
\end{align}
Applying \eqref{qww-3.51}-\eqref{qww-3.55} to \eqref{qww-3.2} gives us the lower bound for all $\ep>0$,
\begin{align}
\Phi \ge (1-\ep)\|\nabla w_t\|^2 + (\ell_0-(2+k_0)\ep) \|\Delta w\|^2 + \|\zeta^t\|^2_{\mathcal{M}^1} - C_\ep(R).
\end{align}
For any fixed $0<\ep<\min\{1,\ell_0/(2+k_0)\}$, we obtain \eqref{qww-3.5}.

Returning to the aim of \eqref{qww-3}, we first add 
\[
3\|\nabla w_t\|^2 + 2(\nabla w_t,\nabla w)
\]
to both sides of \eqref{qww-2}, and also insert
\begin{align}
\int_0^\infty g(s) \frac{d}{ds}\|\zeta^t(s)\|^2_{\mathcal{V}^2_a} ds & = -\int_0^\infty g'(s) \|\zeta^t(s)\|^2_{\mathcal{V}^2_a} ds  \notag \\ 
& \ge \delta\int_0^\infty g(s) \|\zeta^t(s)\|^2_{\mathcal{V}^2_a} ds  \notag \\ 
& = \delta\|\zeta^t\|^2_{\mathcal{M}^1}.  \notag
\end{align}
Putting these together and using the second inequality in \eqref{kernel}, \eqref{qww-2} becomes the differential inequality
\begin{align}
& \frac{d}{dt}\Phi + \|\nabla w_t\|^2 + 2(\nabla w_t,\nabla w) + 2\|\Delta w\|^2 - 2k_0\|w\|^2_{\mathcal{V}^2_a} + \delta\|\zeta^t\|^2_{\mathcal{M}^1}  \notag \\
& + 2(b(x)w_t,(-\Delta)w) + 2k_0(\nabla a(x)\cdot\nabla w,(-\Delta)w) - 2\int_0^\infty g(s)(\nabla a(x)\cdot\nabla\zeta^t(s),(-\Delta)w(t)) ds  \notag \\
& \le 3\|\nabla w_t\|^2 + 2(\nabla w_t,\nabla w) + 2(b(x) w_{tt}, (-\Delta)w) + 2k_0(\nabla a(x)\cdot\nabla w_t,(-\Delta)w)  \notag \\
& - 2\int_0^\infty g(s)(a(x)(-\Delta)\zeta^t(s),(-\Delta)w(t)) ds - 2\int_0^\infty g(s)(\nabla a(x)\cdot \nabla \zeta^t_t(s),(-\Delta)w(t)) ds  \notag \\ 
& - 2(\psi'(w)\nabla w,\nabla w_t) - 2(\psi(w),(-\Delta)w) + 2\beta(\nabla u,\nabla w_t) + 2\beta(u,(-\Delta)w).  \label{qww-4}
\end{align}
(We should mention that the final bound of \eqref{lpp-12} is now realized to control the $\nabla\zeta^t_t$ term appearing on the right-hand side.)
Next we employ some basic inequalities, the assumptions on $a$ and $b$, the assumptions \eqref{assm-f-1}\eqref{cons-f-1}, the bounds \eqref{unif-bnd}, \eqref{wer-1} and \eqref{lpp-12}, and finally even the continuous embedding $\mathcal{V}^2_a\hookrightarrow H^1_0(\Omega)$ of (H1r) to control the right-hand side of \eqref{qww-4} with the estimates
\begin{align}
3\|\nabla w_t\|^2 + 2(\nabla w_t,\nabla w) - 2(\psi'(w)\nabla w,\nabla w_t) + 2\beta(\nabla u,\nabla w_t) \le C(R),  \label{qww-5}
\end{align}
\begin{align}
2(b(x) w_{tt},(-\Delta)w) & \le C(R) + \frac{1}{4}\|\Delta w\|^2,  \label{qww-6}
\end{align}
\begin{align}
2k_0(\nabla a(x)\cdot\nabla w_t,(-\Delta)w) & \le C(R) + \frac{1}{4}\|\Delta w\|^2,  \label{qww-7}
\end{align}
\begin{align}
-2\int_0^\infty g(s)(a(x)(-\Delta)\zeta^t(s),(-\Delta)w(t)) ds & = -2\int_0^\infty g(s)(\zeta^t(s),w(t))_{\mathcal{V}^2_a} ds  \notag \\ 
& \le 2\int_0^\infty g(s)\|\zeta^t(s)\|_{\mathcal{V}^2_a}\|w(t)\|_{\mathcal{V}^2_a} ds  \notag \\
& \le \ep \|\zeta^t\|^2_{\mathcal{M}^1} + \frac{2}{\ep k_0}k_0\|w\|^2_{\mathcal{V}^2_a},  \label{qww-8}  
\end{align}
\begin{align}
-2\int_0^\infty g(s)(\nabla a(x)\cdot \nabla \zeta^t_t(s),(-\Delta)w(t)) ds & \le 2\int_0^\infty g(s)\|\nabla a(x)\cdot \nabla \zeta^t_t(s)\| \|\Delta w(t)\| ds  \notag \\
& \le 2\int_0^\infty g(s)\|\nabla a\|_\infty \|\nabla \zeta^t_t(s)\| \|\Delta w(t)\| ds  \notag \\
& \le \frac{1}{\ep}\|\nabla a\|^2_\infty \int_0^\infty g(s) \|\nabla \zeta^t_t(s)\|^2 ds + \ep\int_0^\infty g(s) \|\Delta w(t)\|^2 ds  \notag \\
& = \frac{1}{\ep}\|\nabla a\|^2_\infty \|\zeta^t_t\|^2_{L^2_g(\mathbb{R}^+;H^1_0(\Omega))} + \ep k_0\|\Delta w\|^2  \notag \\
& \le C_\ep(R) \|\zeta^t_t\|^2_{\mathcal{M}^0} + \ep k_0\|\Delta w\|^2  \notag \\
& \le C_\ep(R) + \ep k_0\|\Delta w\|^2,  \label{qww-9}  
\end{align}
\begin{align}
-2(\psi(w),(-\Delta)w) & \le C(R) + \frac{1}{4}\|\Delta w\|^2,  \label{qww-10}
\end{align}
and
\begin{align}
2\beta(u,(-\Delta)w) & \le C(R) + \frac{1}{4}\|\Delta w\|^2.  \label{qww-11}
\end{align}
Hence, \eqref{qww-5}-\eqref{qww-11} show the right-hand side of \eqref{qww-4} is controlled with, for all $\ep>0,$
\[
C_\ep(R) + (1+\ep k_0)\|\Delta w\|^2 + \frac{2}{\ep k_0}k_0\|w\|^2_{\mathcal{V}^2_a} + \ep\|\zeta^t\|^2_{\mathcal{M}^1}.
\]
Now fixing $0<\ep<\min\{1/k_0,\delta\}$ and setting 
\[
m_2=m_2(k_0,\delta):=\min\{1-\ep k_0,\delta-\ep\}>0 \quad \text{and} \quad c=c(k_0):=2\left( 1+\frac{1}{\ep k_0} \right)
\]
we arrive at the desired estimate \eqref{qww-3}.

So now we integrate the linear differential inequality \eqref{qww-3} and apply $\Phi(0)=0$.
Thus,
\begin{align}
& \|\Delta w(t)\|^2 + \|\nabla w_t(t)\|^2 + \|\zeta^t\|^2_{\mathcal{M}^1} \le Q_{\delta}(R),  \label{qww-12}
\end{align}
for some positive nondecreasing function $Q_{\delta}(\cdot) \sim \delta^{-1}.$
By combining \eqref{qww-12}, \eqref{lpp-12} and the Poincar\'{e} inequality \eqref{Poincare}, we see that, with the $H^2$-elliptic regularity estimate \eqref{reg-est}, we have with uniform bounds 
\[
w(t)\in H^2(\Omega) \quad \text{and} \quad w_t(t)\in H^1(\Omega) \quad \forall\ t>0.
\]
Additionally, collecting the bounds \eqref{qww-12} and \eqref{part-z} establishes that, for all $t\ge0$, 
\begin{align}
\|\zeta^t\|^2_{\mathcal{M}^1} + \|\zeta^t_s\|^2_{\mathcal{M}^0} \le Q_{\delta}(R). \label{qww-99}
\end{align}
Lastly, to show \eqref{compact} holds we need to control the last term of the norm \eqref{new-norm}.
With the bound \eqref{unif-bnd}, we apply the conclusion of Lemma \ref{what-2} here in the form 
\begin{align}
\sup_{\tau\ge1} \tau\mathbb{T}(\tau;\zeta^t) & \le 2 \left( t+2 \right)e^{-\delta t} \sup_{\tau\ge1} \tau\mathbb{T}(\tau;\zeta_0) + C(R) \notag \\
& \le C(R).  \label{strong-bound-1} 
\end{align}
where the last inequality follows from the null initial condition given in \eqref{problem-w}$_4$.
Together, the estimates \eqref{qww-12}-\eqref{strong-bound-1} show that \eqref{compact} holds.
This completes the proof.
\end{proof}

We now prove the main theorem.

\begin{proof}[Proof of Theorem \ref{t:main-reg}]
Define the subset $\mathcal{C}$ of $\mathcal{K}^1$ by 
\begin{equation*}
\mathcal{C} := \{ U=(u,v,\eta) \in \mathcal{K}^1: \|U\|_{\mathcal{K}^1} \le Q(R) \},
\end{equation*}
where $Q(R)>0$ is the function from Lemma \ref{t:uniform-compactness}, and $R>0$ is such that $\|U_{0}\|_{\mathcal{H}^0} \le R.$ 
Let now $U_0=(u_{0},u_{1},\eta_0) \in \mathcal{B}$ (the bounded absorbing set of Corollary \ref{t:abs-set} endowed with the topology of $\mathcal{H}^0$). 
Then, for all $t\ge0$ and for all $U_{0}\in \mathcal{B}$, $S(t)U_{0} = Z(t)U_{0} + K(t)U_{0}$, where $Z(t)$ is uniformly and exponentially decaying to zero by Lemma \ref{t:uniform-decay}, and, by Lemma \ref{t:uniform-compactness}, $K(t)$ is uniformly bounded in $\mathcal{K}^1.$ 
In particular, there holds
\begin{equation*}
\mathrm{dist}_{\mathcal{H}^0}(S(t)\mathcal{B},\mathcal{C}) \le Q(R)e^{-\omega t}.
\end{equation*}
The proof is finished.
\end{proof}

\section{Conclusions}

We have show that the global attractors associated with a wave equation with degenerate viscoelastic dissipation in the form of degenerate memory possesses more regularity than previously obtained in \cite{CFM-16}.
This is established under reasonable assumptions by showing the existence of a compact attracting set to which global attractor resides.
Moreover, the global attractor consists of regular solutions.
The main difficulties encountered here are due to the degeneracy of the dissipation term as well as obtaining compactness for the memory term.

\appendix  \label{appendix}
\section{}

We include two frequently used Gr\"{o}nwall-type inequalities that are important to this paper.
The first can be found in \cite[Lemma 5]{Pata-Zelik-06}; the second in \cite[Lemma 2.2]{Grasselli&Pata04}.

\begin{proposition}  \label{GL}
Let $\Lambda :\mathbb{R}^+\rightarrow \mathbb{R}^+$ be an absolutely continuous function satisfying 
\begin{equation*}
\frac{d}{dt}\Lambda(t) + 2\eta \Lambda(t) \le h(t)\Lambda(t)+k,
\end{equation*}
where $\eta>0$, $k\ge0$ and $\int_s^t h(\tau)d\tau \le \eta(t-s)+m$, for all $t\ge s\ge0$ and some $m\ge 0$. 
Then, for all $t\ge0$, 
\begin{equation*}
\Lambda(t) \le \Lambda(0)e^{m}e^{-\eta t}+\frac{ke^m}{\eta}.
\end{equation*}
\end{proposition}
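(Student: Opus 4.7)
My plan is to treat this as a standard linear Gr\"{o}nwall argument handled via an integrating factor, where the hypothesis on $\int_s^t h(\tau)\,d\tau$ is what supplies the uniform exponential decay after the factor is inverted.

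First I would rewrite the differential inequality as
\[
\frac{d}{dt}\Lambda(t) + \bigl(2\eta - h(t)\bigr)\Lambda(t) \le k,
\]
and multiply through by the integrating factor
\[
\mu(t) := \exp\!\left(\int_0^t \bigl(2\eta - h(\tau)\bigr)\,d\tau\right),
\]
which is legitimate since $\Lambda$ is absolutely continuous and $h$ is locally integrable (as a consequence of the hypothesis bounding $\int_s^t h$). This produces $\frac{d}{dt}\bigl(\mu(t)\Lambda(t)\bigr) \le k\mu(t)$ almost everywhere, and integrating from $0$ to $t$ yields
\[
\Lambda(t) \le \Lambda(0)\,\frac{\mu(0)}{\mu(t)} + k\int_0^t \frac{\mu(s)}{\mu(t)}\,ds = \Lambda(0)\exp\!\left(-\int_0^t (2\eta - h(\tau))\,d\tau\right) + k\int_0^t \exp\!\left(-\int_s^t (2\eta - h(\tau))\,d\tau\right)ds.
\]

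The second step is to apply the hypothesis $\int_s^t h(\tau)\,d\tau \le \eta(t-s) + m$, which gives, for every $0 \le s \le t$,
\[
-\int_s^t \bigl(2\eta - h(\tau)\bigr)\,d\tau = -2\eta(t-s) + \int_s^t h(\tau)\,d\tau \le -\eta(t-s) + m.
\]
Substituting into the previous line yields the clean bound
\[
\Lambda(t) \le \Lambda(0)\,e^{m}e^{-\eta t} + k\,e^{m}\int_0^t e^{-\eta(t-s)}\,ds \le \Lambda(0)\,e^{m}e^{-\eta t} + \frac{k\,e^{m}}{\eta}\bigl(1 - e^{-\eta t}\bigr),
\]
and dropping the nonnegative factor $(1-e^{-\eta t}) \le 1$ in the last term gives exactly the stated conclusion.

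There is no real obstacle: the argument is a one-shot calculation. The only subtlety worth highlighting is that the hypothesis on $\int_s^t h$ must be applied with the endpoints $s$ and $t$ that appear inside the integrating-factor ratio $\mu(s)/\mu(t)$, \emph{not} with the endpoints $0$ and $t$; applying it pointwise over $[0,t]$ would only give the weaker bound $\Lambda(0)e^m + (ke^m/\eta)(e^{\eta t}-1)/\eta$ and destroy the decay. Apart from this, the proof is just bookkeeping.
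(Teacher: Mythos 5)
Your argument is correct: the integrating-factor computation, together with the observation that the hypothesis on $\int_s^t h$ must be applied to the ratio $\mu(s)/\mu(t)$ for each $s\in[0,t]$, yields exactly the stated bound $\Lambda(0)e^{m}e^{-\eta t}+ke^{m}/\eta$. The paper itself offers no proof of this proposition---it simply cites \cite[Lemma 5]{Pata-Zelik-06}---and your argument is the standard one given there, so nothing further is needed.
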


\begin{proposition}  \label{GL2}
Let $\Phi:[0,\infty)\rightarrow[0,\infty)$ be an absolutely continuous function such that, for some $\ep>0$,
\[
\frac{d}{dt}\Phi(t)+2\ep\Phi(t)\le f(t)\Phi(t)+h(t)
\]
for almost every $t\in[0,\infty)$, where $f$ and $h$ are functions on $[0,\infty)$ such that 
\[
\int_s^t|f(\tau)|d\tau \le \alpha(1+(t-s)^\lambda), \quad \sup_{t\ge0}\int_t^{t+1}|h(\tau)|d\tau\le\beta
\]
for some $\alpha,\beta\ge0$ and $\lambda\in[0,1)$.
Then
\[
\Phi(t)\le\gamma\Phi(0)e^{-\ep t}+K
\]
for every $t\in[0,\infty)$, for some $\gamma=\gamma(f,\ep,\lambda)\ge1$ and $K=K(\ep,\lambda,f,h)\ge0.$
\end{proposition}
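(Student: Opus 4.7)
The plan is to prove this Gr\"{o}nwall-type proposition by the standard integrating factor device, exploiting that the exponential decay rate $2\ep$ strictly dominates the sublinear growth $\int_s^t|f|\,d\tau = O((t-s)^\lambda)$ with $\lambda<1$. Set $F(t):=\int_0^t f(\tau)\,d\tau$ and introduce the auxiliary quantity $\Psi(t):=\Phi(t)\exp(2\ep t - F(t))$. A direct computation using the differential inequality gives
\[
\Psi'(t) = \bigl(\Phi'(t) + (2\ep - f(t))\Phi(t)\bigr)e^{2\ep t - F(t)} \le h(t)e^{2\ep t - F(t)} \qquad \text{for a.e. }t\ge0,
\]
since the $\pm 2\ep\Phi$ and $\pm f\Phi$ terms cancel. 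Integrating on $[0,t]$ and multiplying through by $e^{-2\ep t + F(t)}$ yields
\[
\Phi(t) \le \Phi(0)\exp\bigl(-2\ep t + F(t)\bigr) + \int_0^t h(\tau)\exp\bigl(-2\ep(t-\tau) + F(t)-F(\tau)\bigr)\,d\tau.
\]

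Next I would bound the exponentials using the hypothesis $|F(t)-F(s)|\le \int_s^t|f|\,d\tau \le \alpha(1+(t-s)^\lambda)$. The key elementary observation is that, since $\lambda<1$, for any fixed $\ep>0$ there exists a constant $C_1=C_1(\alpha,\ep,\lambda)$ such that $\alpha(1+\sigma^\lambda)\le \ep\sigma + C_1$ for all $\sigma\ge 0$; this is immediate by maximizing $\alpha\sigma^\lambda - \ep\sigma$ in $\sigma$. Applied to $\sigma = t$, this controls the first term by $\gamma\Phi(0)e^{-\ep t}$ with $\gamma:=e^{C_1}$. Applied to $\sigma = t-\tau$, it bounds the kernel inside the integral by $e^{C_1}e^{-\ep(t-\tau)}$.

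To handle the remaining integral against $h$, I would partition $[0,t]$ into unit intervals measured from the right endpoint, writing it as $\sum_{k=0}^{\lfloor t\rfloor}\int_{\max(t-k-1,0)}^{t-k}$. On each slab the exponential factor $e^{-\ep(t-\tau)}$ is bounded above by $e^{-\ep k}$, while the hypothesis $\sup_{s\ge0}\int_s^{s+1}|h|\,d\tau\le\beta$ controls the $h$-integral by $\beta$. Summing the geometric series in $k$ produces a finite constant $K = K(\ep,\lambda,\alpha,\beta)$. Combined with the previous step, this yields the desired conclusion $\Phi(t)\le\gamma\Phi(0)e^{-\ep t} + K$.

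The only genuinely technical point is the algebraic inequality $\alpha\sigma^\lambda\le \ep\sigma + (C_1-\alpha)$ for $\sigma\ge 0$, and this is precisely where the strict sublinearity assumption $\lambda<1$ is essential: at $\lambda = 1$ one could only absorb $f$ into the rate under the further restriction $\alpha<\ep$, and the entire exponential-decay conclusion could fail. Everything else in the argument is bookkeeping with exponentials and the sup-over-unit-interval bound on $h$, so no delicate analytic issue arises beyond this one-line convex inequality.
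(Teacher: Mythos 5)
Your argument is correct: the integrating-factor reduction, the convexity inequality $\alpha(1+\sigma^{\lambda})\le\ep\sigma+C_1$ (valid precisely because $\lambda<1$), and the unit-interval slicing of the $h$-integral against the translation-bounded hypothesis together give exactly $\Phi(t)\le e^{C_1}\Phi(0)e^{-\ep t}+e^{C_1}\beta/(1-e^{-\ep})$. The paper does not prove this proposition at all --- it is quoted verbatim from \cite[Lemma 2.2]{Grasselli&Pata04} --- and your proof is essentially the standard argument given in that reference, so there is nothing to flag beyond noting that you should write $|h(\tau)|$ when bounding the Duhamel term.
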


\section*{Acknowledgments}

The author is indebted to the anonymous referees for their careful reading of the manuscript and for their helpful comments and suggestions---in particular, for the reference \cite{CRV-08}.

\bigskip

\bibliographystyle{amsplain}
\providecommand{\bysame}{\leavevmode\hbox to3em{\hrulefill}\thinspace}
\providecommand{\MR}{\relax\ifhmode\unskip\space\fi MR }
\providecommand{\MRhref}[2]{%
  \href{http://www.ams.org/mathscinet-getitem?mr=#1}{#2}
}
\providecommand{\href}[2]{#2}

\end{document}